\newtheorem{theorem}{Theorem}
\theoremstyle{plain}
\newtheorem{corollary}{Corollary}
\newtheorem{definition}{Definition}
\newtheorem{example}{Example}
\newtheorem{lemma}{Lemma}
\newtheorem{problem}{Problem}
\newtheorem{proposition}{Proposition}
\numberwithin{equation}{section}
\newcommand{\Pn}{\P(\C^{n+1})}
\def\x{\dot{x}}
\def\u{\dot{u}}
\def\sic{\si_n^2}
\def\al{\alpha}
\def\ga{\gamma}
\def\de{\delta}
\def\ze{\zeta}
\def\ka{\kappa}
\def\la{\lambda}
\def\si{\sigma}
\def\Si{\Sigma}
\def\C{{\mathbb C}}
\def\R{{\mathbb R}}
\def\K{{\mathbb K}}
\def\P{{\mathbb P}}
\def\S{{\mathbb S}}
\def\CM{\mathcal{M}}
\def\CN{\mathcal{N}}
\def\CU{\mathcal{U}}
\def\CW{\mathcal{W}}
\def\GL{{\mathbb G}{\mathbb L}_{n,m}}
\def\GLS{{\mathbb G }{\mathbb L}_{n,m}^>}
\def\Knm{{\mathbb K}^{n \times m}}
\def\PKn{\P\left(\K^{n \times (n+1)}\right)}
\def\Pn{\P_n\left(\K\right)}
\newcommand{\rank}{\mbox{rank }}
\newcommand{\trace}{\mbox{trace }}
\newcommand{\diag}{\mbox{diag }}
\newcommand{\ra}{\rightarrow}
\title{Convexity properties of the condition number.
\thanks{Mathematics Subject Classification (MSC2000): 65F35 (Primary),
15A12 (Secondary).}}
\author{Carlos Beltr\'an
        \thanks{C. Beltr\'an,
               Departmento de Matem\'aticas, Estad\'isticas y Computac\'ion
               Universidad de Cant\'abria,
               Santander, Espa\~na
               ({\tt beltranc@gmail.com}). CB was supported by MTM2007-62799 and by a Spanish postdoctoral grant. 
}
\and      Jean-Pierre Dedieu
          \thanks{J.-P. Dedieu,
               Institut de Math\'ematiques,
               Universit\'e Paul Sabatier,
               31062 Toulouse cedex 09, France
                ({\tt jean-pierre.dedieu@math.univ-toulouse.fr}). J.-P. Dedieu was supported by the ANR Gecko.
}
\and    Gregorio Malajovich
        \thanks{G. Malajovich,
                Departamento de Matem\'atica Aplicada,
                Universidade Federal de Rio de Janeiro,
                Caixa Postal 68530,
                CEP 21945-970, Rio de Janeiro, RJ, Brazil
                ({\tt gregorio@ufrj.br}). He was partially supported by CNPq 
(Conselho Nacional de Desenvolvimento Cient\'i{\i}fico e Tecnol\'ogico - Brasil), 
by FAPERJ
  (Funda\c{c}$\tilde{a}$o Carlos Chagas de Amparo \`a Pesquisa do Estado do Rio de
Janeiro) and by the
  Brazil-France agreement of cooperation in Mathematics.
}
\and    Mike Shub
				\thanks{M. Shub,
               Department of Mathematics, 
               University of Toronto, Toronto, 
               Ontario, Canada M5S 2E4
               ({\tt shub.michael@gmail.com}). CB and MS were supported by an NSERC Discovery Grant.
}               
}
\begin{document} 
\maketitle
\begin{abstract}
We define in the space of $n \times m$ matrices of rank $n$, $n\leq m$, the condition Riemannian structure as follows: For a given matrix $A$ the tangent space at $A$ is equipped with the Hermitian inner product obtained by multiplying the usual Frobenius inner product by the inverse of the square of the smallest singular value of $A$ denoted $\si_n(A)$.
When this smallest singular value has multiplicity $1$, the function $A \rightarrow \log (\si_n(A)^{-2})$ is a convex function with respect to the condition Riemannian structure that is $t \rightarrow \log (\si_n(A(t))^{-2})$ is convex, in the usual sense for any geodesic $A(t)$.
In a more abstract setting, a function $\alpha$ defined on a Riemannian manifold $(\CM , \left\langle ,\right\rangle)$ is said to be self-convex when $\log \alpha (\gamma(t))$ is convex for any geodesic in $(\CM , \al \left\langle , \right\rangle)$. Necessary and sufficient conditions for self-convexity are given when $\alpha$ is $C^2$. When $\alpha(x) = d(x,\CN)^{-2}$ where $d(x,\CN)$ is the distance from $x$ to a $C^2$ submanifold  $\CN \subset \R^j$ we prove that $\alpha$ is self-convex when restricted to the largest open set of  points $x$ where there is a unique closest point in $\CN$ to $x$. We also show,  using this more general notion, that the square of the condition number $\left\| A \right\|_F/ \sigma_n(A)$ is self-convex in projective space and the solution variety.
\end{abstract}


\section{Introduction}

Let two integers $1\leq n\leq m$ be given and let us consider the space of matrices $\Knm$, $\K = \R$ or $\C$, equipped with the Frobenius Hermitian product
$$\left\langle M,N \right\rangle_F = \trace (N^*M) = \sum_{i,j}m_{ij}\overline{n_{ij}}.$$
Given an absolutly continuous path $A(t)$, $a \le t \le b$, its length is given by the integral
$$L = \int_a^b \left\|\frac{dA(t)}{dt}\right\|_F dt,$$
and the shortest path connecting $A(a)$ to $A(b)$ is the segment connecting them. Consider now the problem of connecting these two matrices with the shortest possible path in staying, as much as possible, away from the set of ``singular matrices'' that is the matrices with non-maximal rank. 

The singular values of a matrix $A \in \Knm$ are denoted in non-increasing order: 
$$\si_1(A) \ge \ldots \ge \si_{n-1}(A) \ge \si_n(A) \ge 0.$$
We denote by $\GL$ the space of matrices $A \in \Knm$ with maximal rank : $\rank A = n$, that is $\si_n(A) > 0$ so that the set of singular matrices is 
$$\CN = \Knm \setminus \GL = \left\{ A \in \Knm \ : \ \si_n(A) = 0 \right\} .$$
Since the smallest singular value of a matrix is equal to the distance from the set of singular matrices:
$$\si_n(A) = d_F(A, \CN) = \min_{S \in \CN} \left\| A-S \right\|_F,$$
given an absolutly continuous path $A(t)$, $a \le t \le b$, we define its ``condition length'' by the integral
$$L_\ka = \int_a^b \left\|\frac{dA(t)}{dt}\right\|_F \si_n(A(t))^{-1}dt.$$
A good compromise between length and distance to $\CN$ is obtained in minimizing $L_\ka$. We call  ``minimizing condition geodesic'' an absolutly continuous path, parametrized by arc length,  which minimizes $L_\ka$ in the set of absolutly continuous paths with given end-points and condition distance $d_\ka(A,B)$ between two matrices the length $L_\ka$ of a minimizing condition geodesic with endpoints $A$ and $B$, if any. 

In this paper our objective is to investigate the properties of the smallest singular value $\sigma_n(A(t))$ along a condition geodesic. Our main result says that the map $\log \left(\sigma_n(A(t))^{-1}\right)$ is convex. Thus $\sigma_n(A(t))$ is concave, and its minimum value along the path is reached at one of the endpoints.

Note that a similar property holds in the case of hyperbolic geometry where instead of $\Knm$ we take $\R^{n-1} \times [0, \infty[$, instead of $\CN$ we have 
$\R^{n-1} \times \left\{ 0 \right\}$, and where the length of a path $a(t) = (a_1(t), \ldots , a_n(t))$ is defined by the integral 
$$\int \left\|\frac{da(t)}{dt}\right\| a_n(t)^{-1}dt.$$
Geodesics in that case are arcs of circles centered at $\R^{n-1} \times \left\{0\right\}$ or segments of vertical lines, and $\log \left( a_n(t)^{-1} \right)$ is convex along such paths. 

The approach used here to prove our theorems is heavily based on Riemannian geometry. We define on $\GL$ the following Riemannian structure:
$$\left\langle M,N \right\rangle_{\ka, A} = \si_n(A)^{-2} {\rm Re}\left\langle M,N \right\rangle_F$$
where $M, N \in \Knm$ and $A \in \GL$. The minimizing condition geodesics defined previously are clearly geodesic in $\GL$ for this Riemannian structure so that we may use the toolbox of Riemannian geometry. In fact things are not so simple: the smallest singular value $\sigma_n(A)$ is a locally Lipschitz map in $\GL$, and it is smooth on the open subset
$$\GLS = \left\{ A \in \GL \ : \ \sigma_{n-1}(A) > \sigma_{n}(A)\right\}$$
that is when the smallest singular value of $A$ is simple. On the open subset $\GLS$ the metric $\left\langle \cdot , \cdot \right\rangle_\ka$ defines a smooth Riemannian structure, and we call ``condition geodesics'' the geodesics related to this structure. Such a path is not necessarily a minimizing geodesic. Our first main theorem establishes a remarkable property of the condition Riemannian structure:

\begin{theorem} \label{th-1} $\si_n^{-2}$ is logarithmically convex on $\GLS$ i.e. for any geodesic curve $\ga(t)$ in $\GLS$ for the condition metric the map 
$\log \left(\si_n^{-2} (\ga(t))\right)$ is convex. 
\end{theorem}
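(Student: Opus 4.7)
The plan is to deduce Theorem~\ref{th-1} from the more general self-convexity theorem for squared inverse distance functions to $C^2$ submanifolds of Euclidean space, which is one of the main results promised in the introduction. Specifically, I view $\Knm$ as a real Euclidean space $\R^j$ of appropriate dimension by taking $\mathrm{Re}\langle\cdot,\cdot\rangle_F$ as the inner product, and let $\CN'$ denote the subset of matrices of rank exactly $n-1$. The goal is to show that on $\GLS$ the function $\si_n^{-2}$ agrees with $d_F(\cdot,\CN')^{-2}$, with a unique minimizer, so that the general theorem applies directly.

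First I would recall that $\CN'$ is a smooth (hence $C^2$) submanifold of $\Knm$; this is classical, since $\CN'$ is precisely the smooth stratum of the determinantal variety $\CN = \Knm \setminus \GL$. Next, using the Eckart--Young theorem, if $A \in \GLS$ has an SVD $A = U\Si V^*$ then the closest element of $\CN$ to $A$ in the Frobenius norm is $A' = U\Si' V^*$, where $\Si'$ is obtained from $\Si$ by replacing its $(n,n)$ entry with $0$. Uniqueness of this nearest point is classically equivalent to the strict inequality $\si_{n-1}(A) > \si_n(A)$ defining $\GLS$; moreover, since $\si_{n-1}(A) > 0$, the minimizer $A'$ has rank exactly $n-1$ and therefore lies in $\CN'$. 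Consequently, for every $A \in \GLS$, $d_F(A,\CN') = \si_n(A)$ and is attained at a single point, so $\GLS$ is contained in the open set on which the abstract result guarantees self-convexity of $d_F(\cdot,\CN')^{-2}$.

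With these ingredients in place, the theorem follows immediately: on $\GLS$, the function $\alpha := \si_n^{-2}$ coincides with $d_F(\cdot,\CN')^{-2}$, and the condition metric $\langle\cdot,\cdot\rangle_\ka = \alpha\,\mathrm{Re}\langle\cdot,\cdot\rangle_F$ is exactly the conformally rescaled metric in the abstract self-convexity framework. Self-convexity of $\alpha$ says, by definition, that $t \mapsto \log(\si_n(\ga(t))^{-2})$ is convex along every geodesic $\ga$ of the condition structure, which is the desired conclusion. The genuine obstacle is the general self-convexity theorem for inverse-squared distance functions to submanifolds; once that machinery is in hand, Theorem~\ref{th-1} reduces to the SVD-based identification of $\si_n$ with a smooth distance function to $\CN'$ and to the routine verification of uniqueness of nearest points via Eckart--Young.
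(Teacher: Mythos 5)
Your argument is correct, but it reaches Theorem~\ref{th-1} by a genuinely different route than the paper. The paper proves the theorem (restated as Corollary~\ref{cor-3}) by brute force: after reducing, via Proposition~\ref{pro-3}, to the inequality $2\|U\|_F^2\|D\si_n(A)\|_F^2\geq D^2\si_n^2(A)(U,U)$, it diagonalizes $A=(\Si,0)$ by unitary invariance and reads the inequality off the explicit perturbation formulas of Proposition~\ref{pro-4}, where the correction term $-2\sum_{k<n}|u_{kn}\si_n+\overline{u_{nk}}\si_k|^2/(\si_k^2-\si_n^2)$ is visibly nonpositive. You instead invoke Theorem~\ref{th-2}, realizing $\si_n$ on $\GLS$ as the Frobenius distance to the smooth stratum $\CN'$ of matrices of rank exactly $n-1$ (viewing $\Knm$ as a real Euclidean space), with Eckart--Young supplying both the identification $d_F(A,\CN')=\si_n(A)$ and the uniqueness of the minimizer precisely under $\si_{n-1}(A)>\si_n(A)$; since $\GLS$ is open and $\CN'\cap\GLS=\emptyset$, it sits inside the set $\CU\setminus\CN'$ where Theorem~\ref{th-2} applies, and geodesy being local, condition geodesics of $\GLS$ are geodesics there. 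There is no circularity, as the paper's proof of Theorem~\ref{th-2} (Section 6) depends only on Propositions~\ref{pro-3} and~\ref{pro-5}, not on the matrix computations. In effect the two proofs verify the same inequality $D^2\rho^2\le 2\|\cdot\|^2$ for $\rho=\si_n$: the paper does it by explicit spectral perturbation theory, while you get it for free from the second-order optimality condition $\langle DK(x)\x,\x\rangle\ge 0$ for the nearest-point projection. Your route is more conceptual and shorter given Theorem~\ref{th-2}, and it explains \emph{why} the result holds; the paper's computational route has the advantage of producing the explicit formulas of Proposition~\ref{pro-4}, which are reused for the sphere, the projective space, and the solution variety (Corollary~\ref{cor-2}, Lemma~\ref{lem-der1dies}, Theorem~\ref{th-3}), where the purely metric argument would not suffice as stated. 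The only points you should make explicit are that $\CN'$ is a $C^\infty$ embedded submanifold without boundary (the smooth locus of the determinantal variety) and that the infimum over the non-closed set $\CN'$ is actually attained in $\CN'$ for $A\in\GLS$ because $\si_{n-1}(A)>0$; both are standard and you essentially say so.
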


\begin{problem}\label{prob-1}
The condition Riemannian structure $\left\langle .,. \right\rangle_\ka$ is defined in $\GL$ where it is is only locally Lipschitz. Let us define condition geodesics in $\GL$ as the extremals of the condition length $L_\ka$ (see for example 
\cite{cla} Chapter 4, Theorem 4.4.3, for the definition of such extremals in the Lipschitz case). 
Is Theorem \ref{th-1} still true for $\GL$? All the examples we have studied confirm that convexity holds, even if $\si_n^{-1} (\ga(t))$ fails to be $C^1$. See Boito-Dedieu \cite{boi}. We intend to address this issue in a future paper. 
\end{problem}

In a second step we extend these results to other spaces of matrices: the sphere $\S_r(\GLS )$ of radius $r$ in $\GLS$ in Corollary \ref{cor-3b}, the projective space $\P\left( \GLS \right)$ in Corollary \ref{cor-3c}. We also consider the case of the solution variety of the homogeneous equation 
$M \zeta = 0$ that is the set of pairs
$$\left\{(M, \zeta) \in {\mathbb K}^{n \times (n+1)} \times {\mathbb K}^{n + 1} \ : \ M \zeta = 0 \right\} .$$
Now our function $\al$ is the square of the condition number studied by Demmel in \cite{dem}.
This is done in the affine context in Theorem \ref{th-3} and in the projective context in Corollary \ref{cor-4}. 

\vskip 3mm Since $\si_n(A)$ is equal to the distance from $A$ to the set of singular matrices a natural question is to ask whether our main result remains valid for the inverse of the distance from certain sets or for more general functions. 

\begin{definition} \label{def-3} Let $(\CM,\langle\cdot,\cdot\rangle)$ be Riemannian and let $\al : \CM \rightarrow \R$ be a function of class $C^2$ with positive values. Let $\CM_\ka$ be the manifold $\CM$ with the new metric
$$\langle\cdot,\cdot\rangle_{\ka , x} = \al (x)\langle\cdot,\cdot\rangle_x $$ 
called condition Riemann structure. 
We say that $\al$ is {\it self-convex} when $\log \al(\ga (t))$ is convex for any geodesic $\ga$ in  $\CM_\ka$. 
\end{definition}

For example, with $\CM = \left\{ x=(x_1, \ldots , x_n) \in \R^n \ : \ x_n>0 \right\}$ equipped with the usual metric, $\al(x) = x_n^{-2}$ is self-convex. The space $\CM_\ka$ is the Poincar\'e model of hyperbolic space.

In the following theorem we prove self-convexity for the distance function to a $C^2$ submanifold without boundary $\CN \subset \R^j$. Let us denote by
$$\rho(x) = d(x,\CN) = \min_{y \in \CN} \left\| x-y \right\| \ \mbox{and} \ \al(x) = \frac{1}{\rho(x)^2}.$$
Let $\CU$ be the largest open set in $\R^j$ such that, for any $x \in \CU$, there is a unique closest point in $\CN$ to $x$. When $\CU$ is equipped with the new metric $\al(x)\left\langle .,. \right\rangle$ we have:

\begin{theorem} \label{th-2} The function $\al : \CU \setminus \CN \rightarrow \R$ is self-convex.
\end{theorem}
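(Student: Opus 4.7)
The strategy is to reduce the self-convexity claim to a pointwise Hessian inequality at every $x\in\CU\setminus\CN$ and then derive that inequality from the variational characterization of the nearest point. Because $\CN$ is $C^2$ and each $x\in\CU$ admits a unique closest point, standard tubular-neighborhood theory gives that the closest-point map $y\colon\CU\to\CN$ is $C^1$, hence $\rho^2$ is $C^2$ on $\CU$ and $\rho$ itself is $C^2$ on $\CU\setminus\CN$. All Hessians below therefore exist.

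\emph{Step 1: Infinitesimal criterion.} The condition metric $\al\langle\cdot,\cdot\rangle$ with $\al=\rho^{-2}$ is a conformal rescaling of the Euclidean metric on $\R^j$. The standard transformation of Christoffel symbols under a conformal change of a flat metric turns the geodesic equation on $\CU\setminus\CN$, parametrized so that $|\dot\ga|^2 = \rho(\ga)^2$, into
\[
\ddot\ga \;=\; \frac{2\langle\nabla\rho,\dot\ga\rangle}{\rho}\,\dot\ga \;-\; \rho\,\nabla\rho.
\]
Setting $u=\langle\nabla\rho,\dot\ga\rangle/\rho$ gives $\tfrac{d}{dt}\log\al=-2u$, and a second differentiation, using the identity $\hess\rho\cdot\nabla\rho=0$ (which follows from $|\nabla\rho|\equiv 1$ off $\CN$), yields
\[
\frac{d^2}{dt^2}\log\al(\ga(t)) \;=\; 2 \;-\; 2u^2 \;-\; \frac{2}{\rho}\,\langle\hess\rho\,\dot\ga,\dot\ga\rangle.
\]
As the right-hand side is a quadratic form in $\dot\ga$, self-convexity amounts to the pointwise inequality
\[
\rho(x)\,\langle\hess\rho(x)\,v,v\rangle + \langle\nabla\rho(x),v\rangle^2 \;\le\; |v|^2 \qquad \text{for every } v\in\R^j, \qquad(\ast)
\]
which is precisely what the general criterion announced in the introduction demands of $\al$.

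\emph{Step 2: Closest-point comparison.} Fix $x\in\CU\setminus\CN$ and let $y=y(x)$ be its unique nearest point on $\CN$. For every $z\in\CU$ we have $\rho(z)\le|z-y|$, with equality at $z=x$. Consequently $f(z):=|z-y|^2-\rho(z)^2\ge 0$ attains its minimum at $x$, so $\hess f(x)\succeq 0$. Since $\hess(|\cdot-y|^2)=2I$ and $\hess(\rho^2)=2\,\nabla\rho\otimes\nabla\rho+2\rho\,\hess\rho$, evaluation at $x$ yields
\[
\rho(x)\,\hess\rho(x) + \nabla\rho(x)\otimes\nabla\rho(x) \;\preceq\; I,
\]
which is exactly $(\ast)$ in invariant form. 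Combining this with Step 1 gives $\tfrac{d^2}{dt^2}\log\al(\ga(t))\ge 0$ along every condition geodesic, establishing self-convexity.

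\emph{Main obstacle.} The algebraic reduction in Step 1 is routine; the genuine content is the geometric observation in Step 2 that the smooth function $|\cdot-y(x)|^2$ touches $\rho^2$ from above to second order at $x$, giving a sharp bound on $\hess\rho^2$. This pointwise comparison requires $\rho^2$ to be $C^2$ at $x$, and that is precisely the regularity that fails at focal points on the boundary of $\CU$; this is what forces the theorem to be stated on $\CU\setminus\CN$ rather than on all of $\R^j\setminus\CN$. Problem \ref{prob-1} records the entirely analogous difficulty for Theorem \ref{th-1} outside $\GLS$, where the smallest singular value ceases to be smooth and a separate nonsmooth analysis is needed.
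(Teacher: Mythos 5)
Your proof is correct, and the key step is genuinely different from the paper's. The paper first establishes the infinitesimal criterion $2\|\x\|^2\|D\rho(x)\|^2\geq D^2\rho^2(x)(\x,\x)$ (Proposition \ref{pro-3}, derived via normal coordinates and the Christoffel symbols of the condition metric in Proposition \ref{pro-2}); your Step 1 rederives the same criterion by writing out the conformal geodesic equation and differentiating $\log\al$ twice along a geodesic, which is equivalent and equally valid. The real divergence is in how the Hessian bound is obtained. The paper computes $D^2\rho^2(x)(\x,\x)=2\|\x\|^2-2\langle DK(x)\x,\x\rangle$ via the closest-point map $K$ and then proves $\langle DK(x)\x,\x\rangle\geq 0$ by differentiating the orthogonality relation $\langle x(t)-K(x(t)),\tfrac{d}{dt}K(x(t))\rangle=0$ and invoking second-order optimality (Proposition \ref{pro-5}, parts 4 and 6). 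You instead observe that the smooth function $z\mapsto|z-K(x)|^2$ dominates $\rho^2$ everywhere and touches it to first order at $x$, so that $\hess\rho^2(x)\preceq 2I$ follows immediately from $\hess f(x)\succeq 0$ for $f=|\cdot-K(x)|^2-\rho^2$. This "touching from above" comparison is cleaner: it avoids differentiating $K$ altogether and packages the second-order optimality condition into a one-line positivity statement, while the paper's computation has the side benefit of identifying the exact defect $2\langle DK(x)\x,\x\rangle$ in the inequality. Both arguments require the same regularity input ($\rho^2\in C^2$ on $\CU$, $\|\nabla\rho\|=1$ off $\CN$), which you correctly attribute to tubular-neighborhood theory and which the paper supplies in Proposition \ref{pro-5}. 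Your closing remark about the failure of regularity at focal points, and the analogy with Problem \ref{prob-1}, is accurate.
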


Theorem \ref{th-2} is then extended to the projective case. Let $\CN$ be a $\mathcal{C}^2$ submanifold without boundary of $\mathbb{P}(\mathbb{R}^j)$. 
Let us denote by $d_R$ the Riemannian distance in projective space (points in the projective space are lines throught the origin and the distance $d_R$ between two lines is the angle they make). Let us denote $d_\mathbb{P}=\sin d_R$ (this is also a distance), define $\al(x)=d_\mathbb{P}(x,\CN)^{-2}$, and let $\CU$ be the largest open subset of $\mathbb{P}(\mathbb{R}^j)$ such that for $x\in \CU$ there is a unique closest point from $\CN$ to $x$ for the distance $d_\P$. Then

\begin{corollary}\label{cor-th-2}
The map $\al : \CU \setminus \CN \rightarrow \mathbb{R}$ is self-convex.
\end{corollary}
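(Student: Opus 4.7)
The plan is to reduce Corollary \ref{cor-th-2} to Theorem \ref{th-2} by lifting to the Euclidean cone over $\CN$. Let $\pi : \R^j \setminus\{0\} \to \P(\R^j)$ be the canonical projection and set $\tilde\CN := \pi^{-1}(\CN)$, a $C^2$ submanifold without boundary of $\R^j\setminus\{0\}$ (since $\pi$ is a smooth submersion). Parametrizing each generator ray of $\tilde\CN$ as $\la y$ with $y$ a unit representative and minimizing $\|x-\la y\|^2$ first in $\la$ and then in $[y]\in\CN$ gives, for every $x \in \R^j\setminus\{0\}$,
\[
d(x,\tilde\CN) \;=\; \|x\|\sin d_R(\pi(x),\CN) \;=\; \|x\|\,d_\P(\pi(x),\CN),
\]
so $\tilde\al(x) := d(x,\tilde\CN)^{-2}$ satisfies $\tilde\al(x) = \|x\|^{-2}\al(\pi(x))$. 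The same computation shows that the Euclidean foot of perpendicular from $x$ to $\tilde\CN$ is the orthogonal projection of $x$ onto the line through a $d_\P$-closest unit representative of $\CN$ to $\pi(x)$; since antipodal unit representatives span the same line, $\pi(x)\in \CU$ iff $x$ has a unique Euclidean closest point in $\tilde\CN$.

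To relate condition geodesics, consider the diffeomorphism $\Phi(u,\th) := e^u \th$ from $\R \times (\S^{j-1}\setminus\pi^{-1}(\CN))$ onto $\R^j\setminus(\{0\}\cup\tilde\CN)$. A direct calculation gives $\Phi^*g_{\rm Euc} = e^{2u}(du^2 + g_\S)$ and $\tilde\al\circ\Phi = e^{-2u}\al_\S$, where $g_\S$ is the round metric and $\al_\S := \al\circ\pi$, so the Euclidean condition metric pulls back to the $u$-translation invariant metric
\[
g_{\rm prod} \;:=\; \al_\S(\th)\bigl(du^2 + g_\S\bigr).
\]
Since $\partial_u$ is Killing for $g_{\rm prod}$, the quantity $\al_\S(\th)\dot u$ is conserved along geodesics; imposing $\dot u(0)=0$ forces $\dot u \equiv 0$, and the residual Euler--Lagrange equations reduce exactly to those of the spherical condition metric $(\S^{j-1}\setminus\pi^{-1}(\CN),\,\al_\S\, g_\S)$. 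Equivalently, each slice $\{u_0\}\times\S^{j-1}$ is totally geodesic in $g_{\rm prod}$, so every geodesic of the spherical condition metric embeds as a unit-radius geodesic of the Euclidean condition metric on $\R^j\setminus(\{0\}\cup\tilde\CN)$.

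Finally, the covering $\pi : \S^{j-1}\to\P(\R^j)$ is a local isometry from $\al_\S\, g_\S$ onto the projective condition metric $\al\, g_R$, so every geodesic $\ga(t)$ of $\al\, g_R$ lying in $\CU\setminus\CN$ lifts to a geodesic $\tilde\ga(t)$ of $\al_\S\, g_\S$ on $\S^{j-1}$. Viewing $\tilde\ga(t)$ as a unit-radius curve in $\R^j$, the previous paragraph identifies it as a geodesic of the Euclidean condition metric, and the first paragraph ensures that it stays in the open set where Euclidean closest points in $\tilde\CN$ are unique. Theorem \ref{th-2} applied to $\tilde\CN \subset \R^j\setminus\{0\}$ (the proof is local in the ambient manifold, so the absence of the origin is harmless) then yields that
\[
\log\tilde\al(\tilde\ga(t)) \;=\; \log\al(\ga(t))
\]
is convex in $t$, which is the desired self-convexity. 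The main technical point is the uniqueness-of-closest-point transfer in the first paragraph, and it is handled cleanly by the fact that antipodal projective representatives determine the same line in $\R^j$ and hence the same foot of perpendicular.
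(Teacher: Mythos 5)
Your proof is correct, and it follows the same overall route as the paper's: lift $\CN$ to the cone $\tilde\CN=\pi^{-1}(\CN)$, use the identity $d(x,\tilde\CN)=\|x\|\,d_\P(\pi(x),\CN)$ to invoke Theorem \ref{th-2} on the unit sphere, and then descend to $\P(\R^j)$. The differences are in the two intermediate steps. Where you show that the unit sphere is totally geodesic for the Euclidean condition metric via the conformal change $\Phi(u,\th)=e^u\th$, the pullback $\al_\S(\th)(du^2+g_\S)$, and the conservation of $\al_\S(\th)\dot u$ along geodesics, the paper instead reuses the argument of Corollary \ref{cor-3b}: the radial projection onto the sphere does not increase condition length (because $\tilde\al$ is homogeneous of degree $-2$), so minimizing condition geodesics between sphere points stay on the sphere. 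Your Killing-field computation is cleaner and more conceptual, and it directly gives that \emph{all} spherical condition geodesics are ambient ones, whereas the paper's argument passes through local minimization. For the descent you use that the double cover $\S^{j-1}\to\P(\R^j)$ is a local isometry of condition metrics, while the paper invokes its Proposition \ref{pro-RS} on Riemannian submersions; for a discrete-fiber submersion these are the same statement. You also make explicit two points the paper leaves implicit: that Theorem \ref{th-2} applies verbatim with ambient space $\R^j\setminus\{0\}$ since its proof is local, and the transfer of uniqueness of closest points from $\CN$ to $\tilde\CN$. On the latter, note one boundary caveat (shared with the paper's own proof): if $d_\P(\pi(x),\CN)=1$, i.e.\ the nearest line is orthogonal to $x$, the infimum of $\|x-y\|$ over the punctured cone $\tilde\CN$ is not attained, so your ``iff'' holds only off the locus $\{d_\P(\cdot,\CN)=1\}$; this does not affect the conclusion elsewhere.
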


The extension of Theorem \ref{th-1} and Theorem \ref{th-2} to other types of sets or functions is not obvious. In Example \ref{ex-Frob} we prove that
$\al(A)=\sigma_1(A)^{-2}+\cdots+\sigma_n(A)^{-2}$ is not self-convex in $\GL$.

In Example \ref{ex-circle} we take $\CN = \R^2$, and $\CU$ the unit disk so that $\CU$ contains a point (the center) which has many closest points from $\CN$. In that case the corresponding function $\al : \CU \setminus \CN \rightarrow \R$ is self-convex but it fails to be smooth at the center of the disk. 

In Example \ref{ex-two-points} we provide an example of a submanifold $\CN \subset \R^2$ such that the function $\al(x) = d(x, \CN)^{-2}$ defined on  $\R^2 \setminus \CN$ is not self-convex. 

Our interest in considering the condition metric in the space of matrices comes from recent papers by Shub \cite{shu} and Beltr\'an-Shub \cite{bel} where these authors use condition length along a path in certain solution varieties to estimate step size for continuation methods to follow these paths. They give bounds on the number of steps required in terms of the condition length of the path. If geodesics in the condition metric are followed the known bounds on polynomial system solving are vastly improved. To understand the properties of these geodesics we have begun in this paper with linear systems where we can investigate their properties more deeply. We find self-convexity in the context of this paper remarkable. We do not know if similar issues may naturally arise in linear algebra even for solving systems of linear equations. Similar issues do clearly arise when studying continuation methods for the eigenvalue problem.

\section{Self-convexity}

Let us first start to recall some basic definitions about convexity on Riemannian manifolds. A good reference on this subject is 
Udri\c ste \cite{udr}.

\begin{definition} \label{def-2} We say that a function $f : \CM \rightarrow \R$ is {\it convex} whenever 
$$f(\ga_{xy}(t)) \leq (1-t)f(x) + t f(y)$$
for every $x, y \in \CM$, for every geodesic arc $\ga_{xy}$ joigning $x$ and $y$ and $0 \le t \le 1$. 
\end{definition}

The convexity of $f$ in $\CM$ is equivalent to the convexity in the usual sense of $f \circ \ga_{xy}$ on $[0,1]$ for every $x,y \in U$ and the geodesic $\ga_{xy}$ joining $x$ and $y$ or also to the convexity of $g \circ \ga$  for every geodesic $\ga$ (\cite{udr} Chap. 3, Th. 2.2). Thus, we see that 

\begin{lemma} Self-convexity of a function $\al : \CM \ra \R$ is equivalent to the convexity of $\log \circ \al$ in the condition Riemannian manifold $\CM_\ka$. 
\end{lemma}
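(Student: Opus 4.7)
The plan is to observe that the lemma is a direct restatement of the two definitions together with the classical characterization of Riemannian convexity that was already invoked in the paragraph immediately preceding the statement. First I would recall that by Definition \ref{def-3}, $\al$ is self-convex precisely when, for every geodesic $\ga$ of the conditioned manifold $\CM_\ka$, the real-valued function $t \mapsto \log \al(\ga(t))$ is convex on its domain in the usual sense of convex functions on an interval of $\R$. So the content of self-convexity is a statement about the composition $\log \al \circ \ga$ as $\ga$ ranges over the geodesics of the new metric $\al \langle \cdot , \cdot \rangle$.

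Next I would invoke Udri\c ste, Chap.~3, Th.~2.2 (the same reference cited just above the lemma), but applied to the Riemannian manifold $\CM_\ka$ rather than to $\CM$. That theorem asserts that a function $f$ on a Riemannian manifold is convex in the sense of Definition \ref{def-2} if and only if $f \circ \ga$ is convex on its interval of definition for every geodesic $\ga$ of that Riemannian structure. Setting $f = \log \circ \al$ and taking the ambient Riemannian manifold to be $\CM_\ka$, the right-hand side of this equivalence is exactly the condition appearing in Definition \ref{def-3}, while the left-hand side is exactly the convexity of $\log \circ \al$ on $\CM_\ka$. Putting the two equivalences together yields the lemma.

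I do not anticipate any genuine obstacle here; the only point requiring care is bookkeeping of which metric is in play. The geodesics in Definition \ref{def-2} are those of whatever Riemannian structure is fixed, and the geodesics in Definition \ref{def-3} are those of $\CM_\ka$. Both sides of the desired equivalence refer to the geodesics of the conditioned metric $\al\langle\cdot,\cdot\rangle$, so applying the characterization from Udri\c ste with that metric matches the two conditions verbatim. No further computation is required.
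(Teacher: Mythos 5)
Your argument is exactly the one the paper intends: the lemma is stated as an immediate consequence of Udri\c ste's characterization of convexity via geodesics, applied to the manifold $\CM_\ka$ with $f = \log\circ\al$, and your careful bookkeeping of which metric's geodesics are in play matches the paper's reasoning. The proposal is correct and takes essentially the same approach.
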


\vskip 3mm When $f$ is a function of class $C^2$ in the Riemannian manifold $\CM$, we define its second derivative $D^2 f(x)$ as the second covariant derivative. It is a symmetric bilinear form on $T_x \CM$. Note (\cite[Chapter 1]{udr}) that if $x\in\CM$ and $\dot x\in T_x\CM$, and if $\gamma(t)$ is a geodesic in $\CM$, $\gamma(0)=x,\frac{d}{dt}\gamma(0)=\dot x$, then
\[
D^2f(x)(\dot x,\dot x)=\frac{d^2}{dt^2}(f\circ\gamma)(0).
\]
This second derivative depends on the Riemannian connection on $\CM$. Since $\CM$ is equipped with two different metrics: $\left\langle .,. \right\rangle$ and $\left\langle .,. \right\rangle_\ka$ we have to distinguish between the corresponding second derivatives; they are denoted by $D^2f(x)$ and $D_\ka^2f(x)$ respectively. No such distinction is necessary for the first derivative $Df(x)$.

Convexity on Riemannian manifold is characterized by (see \cite{udr} Chap. 3, Th. 6.2):

\begin{proposition}\label{pro-1} 
A function $f : \CM \rightarrow \R$ of class $C^2$ is convex if and only if $D^2f(x)$ is positive semidefinite for every $x \in \CM$.
\end{proposition}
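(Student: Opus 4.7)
The plan is to reduce the Riemannian statement to the classical one-dimensional characterization of convexity via the identity already recorded in the excerpt: for any $x\in\CM$, any $\dot x\in T_x\CM$, and any geodesic $\gamma$ in $\CM$ with $\gamma(0)=x$ and $\tfrac{d}{dt}\gamma(0)=\dot x$,
\[
D^2f(x)(\dot x,\dot x)=\frac{d^2}{dt^2}(f\circ\gamma)(0).
\]
This identity is the bridge between the intrinsic Hessian and the (ordinary) second derivative along geodesics, and it is the only piece of Riemannian machinery actually needed.

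First I would recall the one-variable fact that a $C^2$ function $g:I\to\R$ on an interval is convex if and only if $g''(t)\ge 0$ for every $t\in I$. Combining this with the discussion immediately preceding the proposition, convexity of $f$ on $\CM$ is equivalent to the convexity of $f\circ\gamma$ on its domain for \emph{every} geodesic $\gamma$ in $\CM$, which in turn is equivalent to $(f\circ\gamma)''(t)\ge 0$ for every geodesic $\gamma$ and every $t$ in its interval of definition.

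For the forward implication, assume $f$ is convex. Fix $x\in\CM$ and $\dot x\in T_x\CM$. By the existence theorem for geodesics there is a geodesic $\gamma$ with $\gamma(0)=x$ and $\dot\gamma(0)=\dot x$; convexity of $f\circ\gamma$ gives $(f\circ\gamma)''(0)\ge 0$, and the bridging identity then yields $D^2f(x)(\dot x,\dot x)\ge 0$. Since $\dot x$ was arbitrary, $D^2f(x)$ is positive semidefinite. For the converse, assume $D^2f(y)$ is positive semidefinite at every $y\in\CM$, and let $\gamma$ be any geodesic. For each $t_0$ in the domain of $\gamma$, the reparametrized curve $s\mapsto\gamma(t_0+s)$ is again a geodesic with initial point $\gamma(t_0)$ and initial velocity $\dot\gamma(t_0)$, so the identity applied at $s=0$ gives
\[
(f\circ\gamma)''(t_0)=D^2f(\gamma(t_0))(\dot\gamma(t_0),\dot\gamma(t_0))\ge 0.
\]
Hence $f\circ\gamma$ is convex for every geodesic $\gamma$, which is exactly the convexity of $f$ on $\CM$.

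The only potentially subtle step is the bridging identity itself, which needs the Levi-Civita connection to interact correctly with geodesics (the term involving $\nabla_{\dot\gamma}\dot\gamma$ vanishes because $\gamma$ is a geodesic). However, since the excerpt quotes this identity directly from Udri\c ste, there is no real obstacle here; the proof is essentially a two-line translation once that identity is in hand.
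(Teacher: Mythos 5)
Your proof is correct. Note that the paper does not actually prove this proposition; it simply cites Udri\c ste (Chap.~3, Th.~6.2), so there is no ``paper proof'' to compare against. Your argument is the standard one and uses exactly the two facts the paper has already put on record: the identity $D^2f(x)(\dot x,\dot x)=\frac{d^2}{dt^2}(f\circ\gamma)(0)$ for geodesics, and the equivalence (quoted from Udri\c ste, Chap.~3, Th.~2.2) between convexity of $f$ on $\CM$ and convexity of $f\circ\gamma$ for every geodesic $\gamma$. The reparametrization step $s\mapsto\gamma(t_0+s)$ in the converse direction is the right way to get nonnegativity of $(f\circ\gamma)''$ at every parameter value, and the one-variable characterization of convexity for $C^2$ functions finishes the argument; nothing is missing.
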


We use this proposition to obtain a caracterisation of self-convexity: $\al$ is self-convex if and only if the second derivative $D_\ka^2(\log \circ \al)(x)$ is positive semidefinite for any $x \in \CM_\ka$. We get

\begin{proposition}\label{pro-2}
For a function $\al : \CM \rightarrow \R$ of class $C^2$ with positive values self-convexity is equivalent to
\[
2\al(x)D^2\al(x)(\x,\x) + \|D\al(x)\|_x^2  \|\x\|_x^2 - 4(D\al(x)\x)^2 \geq 0
\]
for any $x\in \CM$ and for any vector $\x \in T_x\CM$, the tangent space at $x$.
\end{proposition}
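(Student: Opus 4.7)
The plan is to combine the preceding Lemma with Proposition~\ref{pro-1} applied to $\log\al$ on $\CM_\ka$, and then perform a direct Hessian computation under the conformal change of metric. Concretely, the Lemma says that self-convexity of $\al$ is the convexity of $\log\al$ on $(\CM,\langle\cdot,\cdot\rangle_\ka)$, and by Proposition~\ref{pro-1} the latter is equivalent to $D^2_\ka(\log\al)(x)$ being positive semidefinite for every $x\in\CM$. It therefore suffices to express this $\ka$-Hessian in terms of objects attached to the original metric and check that, after clearing a positive common factor, the result coincides with the inequality in the statement.

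The tool is the classical conformal change formula for the Levi-Civita connection: writing $\langle\cdot,\cdot\rangle_\ka = e^{2\phi}\langle\cdot,\cdot\rangle$ with $\phi = \tfrac{1}{2}\log\al$, one has
\[
\nabla^\ka_X Y \;=\; \nabla_X Y + (X\phi)\,Y + (Y\phi)\,X - \langle X,Y\rangle\,\mathrm{grad}\,\phi,
\]
where $\mathrm{grad}$ is taken with respect to the original metric. Using the formula $D^2_\ka f(x)(\x,\x) = \x(\x f) - (\nabla^\ka_\x\x)f$ for any $C^2$ function $f$ (cf.\ the discussion preceding Proposition~\ref{pro-1}), and subtracting its analogue for $\nabla$, a short computation yields the clean transformation rule
\[
D^2_\ka f(x)(\x,\x) \;=\; D^2 f(x)(\x,\x) - 2\,(D\phi(x)\,\x)(Df(x)\,\x) + \|\x\|_x^2\,\langle Df(x),D\phi(x)\rangle_x.
\]

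Specializing to $f=\log\al$ gives $Df = D\al/\al$, $D^2 f(\x,\x) = D^2\al(\x,\x)/\al - (D\al\,\x)^2/\al^2$, while $D\phi = D\al/(2\al)$ and $\langle Df,D\phi\rangle_x = \|D\al\|_x^2/(2\al^2)$. Substituting and placing everything over the common denominator $2\al(x)^2$, one obtains
\[
D^2_\ka(\log\al)(x)(\x,\x) \;=\; \frac{1}{2\al(x)^2}\Bigl(2\al(x)\,D^2\al(x)(\x,\x) + \|D\al(x)\|_x^2\,\|\x\|_x^2 - 4(D\al(x)\,\x)^2\Bigr).
\]
Since the prefactor $1/(2\al^2)$ is strictly positive, positive semidefiniteness of $D^2_\ka(\log\al)(x)$ on $T_x\CM$ is equivalent to non-negativity of the bracketed expression for every $\x\in T_x\CM$, which is precisely the inequality in the statement.

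I expect no genuine obstacle here: the argument is a direct application of the conformal change formula for the Levi-Civita connection together with the usual Hessian-of-a-composition identity. The only point of care is keeping track of the factors of $\al$, $\al^2$, and $\tfrac{1}{2}$ introduced by $\phi=\tfrac{1}{2}\log\al$ and by the logarithmic derivatives, together with the observation that positive semidefiniteness of a bilinear form is preserved under multiplication by the positive scalar $2\al(x)^2$, so that the Hessian inequality and the stated inequality are literally the same condition.
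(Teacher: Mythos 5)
Your proof is correct, and the end result matches the paper's formula $D_\ka^2(\log\al)(x)(\x,\x)=\tfrac{1}{2\al^2}\bigl(2\al D^2\al(\x,\x)+\|D\al\|_x^2\|\x\|_x^2-4(D\al\,\x)^2\bigr)$ exactly. The overall strategy is the same as the paper's — reduce via the Lemma and Proposition~\ref{pro-1} to positive semidefiniteness of the $\ka$-Hessian of $\log\al$, then compute that Hessian under the conformal rescaling — but the implementation of the computation differs. The paper works in normal coordinates at $x$ (so that $g_{ij}(0)=\de_{ij}$ and $\Ga^i_{jk}(0)=0$), explicitly computes the Christoffel symbols $\Ga^i_{\ka,jk}$ of the rescaled metric, and assembles the Hessian componentwise; you instead invoke the invariant transformation law $\nabla^\ka_XY=\nabla_XY+(X\phi)Y+(Y\phi)X-\langle X,Y\rangle\,\mathrm{grad}\,\phi$ for $\langle\cdot,\cdot\rangle_\ka=e^{2\phi}\langle\cdot,\cdot\rangle$ with $\phi=\tfrac12\log\al$, which yields the Hessian transformation rule in one line and avoids coordinates altogether. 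Your route is shorter and makes the structure of the correction terms transparent, at the cost of citing the conformal-change formula as a black box; the paper's normal-coordinate computation is self-contained and effectively reproves that formula at the single point where it is needed. The bookkeeping of the factors of $\al$, $\al^2$ and $\tfrac12$ in your substitution is accurate, and the final reduction (discarding the positive prefactor $1/(2\al^2)$) is the same in both arguments.
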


\begin{proof}
Let $x \in \CM$ be given. Let $\varphi : \R^m \rightarrow \CM$ be a coordinate system such that $\varphi(0)=x$ and with first fundamental form $g_{ij}(0)=\delta_{ij}$ (Kronecker's delta) and Christoffel's symbols $\Gamma_{jk}^i(0) = 0$, and let
$$A = \al \circ \varphi $$
so that $\al(x) = A(0)$. Those coordinates are called ``normal'' or ``geodesic''. Note that this implies
\[
\frac{\partial g_{ij}}{\partial z_k}(0)=0
\]
for all $i,j,k$. We denote by $g_{\ka,ij}$ and $\Gamma_{\ka,jk}^i$ respectively the first fundamental form and the Christoffel symbols for $\varphi$ in $\CM_\ka$. Let us compute them. Note that
\[
g_{\ka,ij}(z)=g_{ij}(z)A(z),
\]
\[
\frac{\partial g_{\ka,ij}}{\partial z_k}(0) = Dg_{\ka,ij}(0)(e_k) = D(g_{ij}A)(0)(e_k) =
\]
\[
g_{ij}(0)DA(0)(e_k) + A(0)Dg_{ij}(0)(e_k) = \delta_{ij}\frac{\partial A}{\partial z_k}(0).
\]
Moreover,
\[
\Gamma_{\ka,jk}^i = \frac{1}{A(0)} \Gamma_{jk}^i = \frac{1}{2A(0)}\left(\frac{\partial g_{\ka,ij}}{\partial z_k}(0) + \frac{\partial g_{\ka,ik}}{\partial z_j}(0) - 
\frac{\partial g_{\ka,jk}}{\partial z_i}(0)\right) =
\]
\[
\frac{1}{2A(0)}\left(\delta_{ij}\frac{\partial A}{\partial z_k}(0) + \delta_{ik}\frac{\partial A}{\partial z_j}(0) - 
\delta_{jk}\frac{\partial A}{\partial z_i}(0)\right).
\]
That is,
\[
\begin{cases}
\Gamma_{\ka, ik}^i = \Gamma_{\ka, ki}^i = \frac{1}{2A(0)}\frac{\partial A}{\partial z_k}(0)&\mbox{for all } i,k,\\
\Gamma_{\ka, jj}^i = \frac{-1}{2A(0)}\frac{\partial A}{\partial z_i}(0),&j\neq i,\\
\Gamma_{\ka, jk}^i = 0&\mbox{otherwise}.
\end{cases}
\]
The second derivative of the composition of two maps 
$$\CM \stackrel{f}{\rightarrow} \R \stackrel{\psi}{\rightarrow} \R$$
is given by the identity (see \cite{udr} Chap. 1.3, Hessian)
$$D^2(\psi \circ f)(x) = D\psi(f(x))D^2f(x) + \psi''(f(x))Df(x)\otimes Df(x)$$
and where $Df(x)\otimes Df(x)$ is the bilinear form on $T_x \CM $ by
$$(Df(x)\otimes Df(x))(u,v) = Df(x)(u) Df(x)(v).$$
This gives in our context, that is when $f = \al$ and $\psi = \log$, 
$$D_\ka^2(\log \circ \al)(x) = \frac{1}{\al(x)}D_\ka^2 \al(x) - \frac{1}{\al(x)^2}D\al(x)\otimes D\al(x).$$

\vskip 3mm According to Proposition \ref{pro-1} our objective is now to give a necessary and sufficient condition for $D_\ka^2(\log \circ \al)(x)$ to be positive semidefinite for each $x \in \CM$. 
In our system of local coordinates the components of $D^2\al(x)$ are (see \cite{udr} Chap. 1.3)
$$A_{jk} = \frac{\partial^2A}{\partial z_j \partial z_k} - \sum_i \Gamma_{jk}^i \frac{\partial A}{\partial z_i} = \frac{\partial^2A}{\partial z_j \partial z_k}$$
while the components of $D_\ka^2\al(x)$ are
$$A_{\ka,jk} = \frac{\partial^2A}{\partial z_j \partial z_k} - \sum_i \Gamma_{\ka,jk}^i \frac{\partial A}{\partial z_i}.$$
If we replace the Christoffel symbols in this last sum by the values previously computed we obtain, when $j=k$,
$$\sum_i \Gamma_{\ka,jj}^i \frac{\partial A}{\partial z_i} = \Gamma_{\ka,jj}^j \frac{\partial A}{\partial z_j} + 
\sum_{i \ne j} \Gamma_{\ka,jj}^i \frac{\partial A}{\partial z_i} = $$
$$\frac{1}{2A}\left( \frac{\partial A}{\partial z_j} \right)^2 - \frac{1}{2A}\sum_{i \ne j}\left( \frac{\partial A}{\partial z_i} \right)^2 = \frac{1}{A}\left( \frac{\partial A}{\partial z_j} \right)^2 - \frac{1}{2A}\sum_{i}\left( \frac{\partial A}{\partial z_i} \right)^2$$
while when $j \ne k$, 
$$\sum_i \Gamma_{\ka,jk}^i \frac{\partial A}{\partial z_i} = \Gamma_{\ka,jk}^j \frac{\partial A}{\partial z_j} + 
\Gamma_{\ka,jk}^k \frac{\partial A}{\partial z_k} = $$
$$\frac{1}{2A} \frac{\partial A}{\partial z_k} \frac{\partial A}{\partial z_j} + \frac{1}{2A} \frac{\partial A}{\partial z_j} \frac{\partial A}{\partial z_k} = \frac{1}{A} \frac{\partial A}{\partial z_j} \frac{\partial A}{\partial z_k}.$$
Both cases are subsumed in the identity
$$\sum_i \Gamma_{\ka,jk}^i \frac{\partial A}{\partial z_i} = \frac{1}{A} \frac{\partial A}{\partial z_j} \frac{\partial A}{\partial z_k} - \frac{\de_{jk}}{2A}\sum_{i}\left( \frac{\partial A}{\partial z_i} \right)^2.$$ 
Putting together all these identities gives the following expression for the components of $D_\ka^2(\log \circ \al)(x)$:
$$D_ka^2(\log \circ \al)(x)_{jk} = \frac{1}{A} \left(\frac{\partial^2A}{\partial z_j \partial z_k} -  \frac{1}{A} \frac{\partial A}{\partial z_j} \frac{\partial A}{\partial z_k} + \frac{\de_{jk}}{2A} \sum_i \left( \frac{\partial A}{\partial z_i} \right)^2\right)
- \frac{1}{A^2}\frac{\partial A}{\partial z_j} \frac{\partial A}{\partial z_k} =$$
$$\frac{1}{2A^2}\left(2A\frac{\partial^2A}{\partial z_j \partial z_k} + \de_{jk} \sum_i \left( \frac{\partial A}{\partial z_i} \right)^2 - 4\frac{\partial A}{\partial z_j} \frac{\partial A}{\partial z_k}\right)
.$$
Thus, $D_\ka^2(\log \circ \al)(x) \ge 0$ if and only if 
$$2\al(x)D^2\al(x) + \left\| D\al(x) \right\|_x^2\left\langle .,. \right\rangle_x - 4D\al(x)\otimes D\al(x)$$ 
is positive semi-definite, that is when
\[
2\al(x)D^2\al(x)(\x,\x) + \|D\al(x)\|_x^2 \|\x\|_x^2 - 4(D\al(x)\x)^2 \geq 0
\]
for any $x\in \CM$ and for any vector $\x \in T_x\CM$. This finishes the proof.
\end{proof}

\vskip 3mm An easy consequence of Proposition \ref{pro-2} is the following. See also Example \ref{ex-two-points}.

\begin{corollary}\label{cor-0}
When a function $\al : \CM \rightarrow \R$ of class $C^2$ is self-convex then any critical point of $\al$ has a positive semi-definite second derivative $D^2\al(x)$. Such a function cannot have a strict local maximum or a non-degenerate saddle.
\end{corollary}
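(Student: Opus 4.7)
The plan is to specialize Proposition \ref{pro-2} to a critical point of $\al$. Let $x_0$ be such a point, so that $D\al(x_0)=0$ as a linear form on $T_{x_0}\CM$. Substituting $D\al(x_0)=0$ into the inequality
\[
2\al(x)D^2\al(x)(\x,\x) + \|D\al(x)\|_x^2 \|\x\|_x^2 - 4(D\al(x)\x)^2 \geq 0
\]
at $x=x_0$ annihilates the last two summands and leaves $2\al(x_0)D^2\al(x_0)(\x,\x)\geq 0$ for every $\x\in T_{x_0}\CM$. Since $\al$ takes only positive values by hypothesis, dividing through by the strictly positive scalar $2\al(x_0)$ yields the positive semi-definiteness of $D^2\al(x_0)$, which is the first assertion of the corollary.

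For the two forbidden critical-point types, the non-degenerate saddle is immediate: a non-degenerate saddle requires a Hessian with eigenvalues of both signs, which is incompatible with positive semi-definiteness. For the strict local maximum assertion I would avoid arguing from the Hessian alone (which might vanish identically in degenerate examples) and instead appeal to the definition of self-convexity directly: along any geodesic $\ga$ of $\CM_\ka$ passing through $x_0$, the real-valued map $t\mapsto \log\al(\ga(t))$ is convex, and a convex function of a single real variable cannot attain a strict local maximum. Hence $\log\al$, and therefore $\al$, cannot have a strict local maximum at $x_0$.

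I do not anticipate any serious obstacle: the corollary is essentially a one-line specialization of the characterization furnished by Proposition \ref{pro-2}, together with the elementary fact that convex real functions have no strict local maxima. The only mild point worth verifying is that ``critical point of $\al$'' and the Hessian $D^2\al(x_0)$ carry the same meaning under either Riemannian structure on $\CM$; this is automatic because at a critical point the Christoffel-symbol contributions to the second covariant derivative are contracted against the zero covector $D\al(x_0)$ and therefore drop out, so one is free to apply Proposition \ref{pro-2} in the metric $\langle\cdot,\cdot\rangle$ of $\CM$ without ambiguity.
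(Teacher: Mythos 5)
Your proof is correct and follows exactly the route the paper intends (the corollary is presented there as an immediate consequence of Proposition \ref{pro-2}, with no written proof): putting $D\al(x_0)=0$ into the inequality of Proposition \ref{pro-2} and dividing by $2\al(x_0)>0$ gives $D^2\al(x_0)\geq 0$, and the saddle case follows at once. Your extra care on the strict-local-maximum claim --- arguing directly from convexity of $\log\al\circ\ga$ along geodesics of $\CM_\ka$ rather than from the Hessian, which could vanish at such a point --- and your remark that the Christoffel corrections to the second covariant derivative drop out at a critical point, so $D^2\al(x_0)$ is unambiguous between the two metrics, are both sound and close the only gaps a purely Hessian-based argument would leave.
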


\begin{proposition}\label{pro-3}
The following condition is equivalent for a $C^2$ function $\al=1/\rho^2 : \CM \longrightarrow \R$ to be self-convex on $\CM$: For every $x \in \CM$ and $\x\in T_x \CM$,
\[
\|\x\|^2\|D\rho(x)\|^2-(D\rho(x)\x)^2-\rho(x)D^2\rho(x)(\x,\x)\geq0,
\]
or, what is the same,
\[
2\|\x\|^2\|D\rho(x)\|^2\geq D^2\rho^2(x)(\x,\x).
\]
\end{proposition}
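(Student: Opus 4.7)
The plan is to apply Proposition \ref{pro-2} with $\al = \rho^{-2}$ and then translate the inequality it produces from the language of $\al$ into the language of $\rho$. Since $\rho$ itself is only $C^2$ on the set where $\al$ is $C^2$, and $\rho>0$ there, the chain rule (equivalently, the Hessian-composition formula used in the proof of Proposition \ref{pro-2}) gives
\[
D\al = -2\rho^{-3}\, D\rho,\qquad D^2\al = 6\rho^{-4}\, D\rho\otimes D\rho - 2\rho^{-3}\, D^2\rho.
\]
These are plain identities of the first and second covariant derivatives; no new geometry is involved.

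Next I would substitute these expressions into the criterion
\[
2\al(x)D^2\al(x)(\x,\x) + \|D\al(x)\|_x^2\|\x\|_x^2 - 4(D\al(x)\x)^2 \ge 0
\]
supplied by Proposition \ref{pro-2}. A short expansion shows that every resulting term carries a factor of $\rho^{-6}$ (the Hessian term appearing with the extra $\rho$ from $2\al$); the coefficients of $(D\rho(x)\x)^2$ combine as $12-16=-4$, and after multiplying through by $\rho(x)^6/4$ the criterion becomes exactly
\[
\|\x\|^2\|D\rho(x)\|^2 - (D\rho(x)\x)^2 - \rho(x)D^2\rho(x)(\x,\x) \ge 0,
\]
which is the first displayed inequality.

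To obtain the second (equivalent) form, I would use the product rule for covariant differentiation of $\rho^2$:
\[
D^2\rho^2(x)(\x,\x) = 2(D\rho(x)\x)^2 + 2\rho(x)D^2\rho(x)(\x,\x),
\]
so that $(D\rho(x)\x)^2 + \rho(x)D^2\rho(x)(\x,\x) = \tfrac12 D^2\rho^2(x)(\x,\x)$. Substituting into the first form turns it into $2\|\x\|^2\|D\rho(x)\|^2 \ge D^2\rho^2(x)(\x,\x)$, completing the equivalence.

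I do not expect any serious obstacle here: the proof is pure bookkeeping of powers of $\rho$ and sign cancellations, once Proposition \ref{pro-2} has been invoked. The only thing to be careful about is that the computation is valid only where $\rho$ is $C^2$ and strictly positive, which is already built into the standing hypothesis that $\al = 1/\rho^2$ is a positive $C^2$ function on $\CM$.
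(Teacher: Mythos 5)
Your proposal is correct and follows essentially the same route as the paper: invoke Proposition \ref{pro-2}, express $D\al$ and $D^2\al$ in terms of $D\rho$ and $D^2\rho$, substitute, and clear the common factor of $\rho^{-6}$ (with the $12-16=-4$ cancellation matching the paper's displayed computation). The passage to the second form via $D^2\rho^2 = 2\,D\rho\otimes D\rho + 2\rho\,D^2\rho$ is also exactly what the paper leaves implicit in ``the proposition follows.''
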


\begin{proof}
Note that
\[
D\al(x)\x=\frac{-2}{\rho(x)^3}D\rho(x)\x,
\]
\[
D^2\al(x)(\x,\x)=\frac{6}{\rho(x)^4}(D\rho(x)\x)^2-\frac{2}{\rho(x)^3}D^2\rho(x)(\x,\x).
\]
Hence, the necessary and sufficient condition of Proposition \ref{pro-2} reads
\[
\frac{4\|\x\|^2\|D\rho(x)\|^2}{\rho(x)^6}-\frac{16}{\rho(x)^6}(D\rho(x)\x)^2+\frac{12}{\rho(x)^6}(D\rho(x)\x)^2-\frac{4}{\rho(x)^5}D^2\rho(x)(\x,\x)\geq0,
\]
and the proposition follows.
\end{proof}

\begin{corollary}\label{cor-1}
Each of the following conditions is sufficient for a function $\al = 1/\rho^2:\CM\longrightarrow\R$ to be self-convex at $x\in \CM$: For every $\x\in T_x\CM$,
\[
D^2\rho(x)(\x,\x)\leq0,
\]
or
\[
\|D^2\rho^2(x)\|\leq2\|D\rho(x)\|^2.
\]
\end{corollary}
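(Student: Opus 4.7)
The plan is to derive the corollary directly from the two equivalent forms of the self-convexity criterion stated in Proposition \ref{pro-3}, by handling each hypothesis via the form that matches it best.

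For the first sufficient condition ($D^2\rho(x)(\x,\x)\le 0$), I would use the first form of the criterion, namely
\[
\|\x\|^2\|D\rho(x)\|^2-(D\rho(x)\x)^2-\rho(x)D^2\rho(x)(\x,\x)\geq0.
\]
The first two terms combine to a nonnegative quantity by the Cauchy--Schwarz inequality applied to the linear functional $D\rho(x)$ on $T_x\CM$, giving $(D\rho(x)\x)^2\leq\|D\rho(x)\|^2\|\x\|^2$. The third term, $-\rho(x)D^2\rho(x)(\x,\x)$, is nonnegative because $\rho(x)>0$ (implicit in $\al=1/\rho^2$ being $C^2$ and positive on $\CM$) and $D^2\rho(x)(\x,\x)\leq 0$ by hypothesis. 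Summing two nonnegative terms yields the required inequality.

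For the second sufficient condition ($\|D^2\rho^2(x)\|\leq 2\|D\rho(x)\|^2$), I would use the second (equivalent) form
\[
2\|\x\|^2\|D\rho(x)\|^2\geq D^2\rho^2(x)(\x,\x).
\]
This follows from the operator-norm bound on the symmetric bilinear form $D^2\rho^2(x)$: for any $\x\in T_x\CM$,
\[
D^2\rho^2(x)(\x,\x)\leq \|D^2\rho^2(x)\|\,\|\x\|^2 \leq 2\|D\rho(x)\|^2\,\|\x\|^2,
\]
which is exactly what is needed.

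There is no real obstacle here; the work was done in Proposition \ref{pro-3}, and the corollary is essentially a bookkeeping exercise that packages its two equivalent formulations into convenient sufficient conditions, each matched with a trivial majorization (Cauchy--Schwarz in the first case, operator norm bound in the second). The only subtlety worth flagging is that the first condition decouples the gradient and Hessian contributions, whereas the second bundles them into a single bound on $D^2\rho^2(x)$, so one cannot derive one from the other directly, which is why the corollary states them as two independent sufficient conditions.
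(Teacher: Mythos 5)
Your proof is correct and is essentially the argument the paper intends: the corollary is stated as an immediate consequence of Proposition \ref{pro-3}, and your two steps (Cauchy--Schwarz for the first condition applied to the first form of the criterion, and the operator-norm bound for the second condition applied to the second form) are exactly the routine majorizations that make it so.
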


In the following proposition we obtain a weaker condition on $\al$ to obtain convexity in $\CM_\ka$ instead of self-convexity.

\begin{proposition}\label{prop:convexnolog}
$\al(x)$ is convex in $\CM_\ka$ if and only if
\[
2\al(x)D^2\al(x)(\x,\x) + \|D\al(x)\|_x^2\|\x\|_x^2 - 2(D\al(x)\x)^2\geq0,
\]
for any $x\in\CM$ and any vector $\x \in T_x\CM$.
\end{proposition}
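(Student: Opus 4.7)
The plan is to mimic the proof of Proposition \ref{pro-2} verbatim, but skipping the step in which one composes $\al$ with $\log$. The key point is that convexity of $\al$ in $\CM_\ka$ is characterized by Proposition \ref{pro-1} as the positive semidefiniteness of $D_\ka^2\al(x)$ at every $x$, whereas self-convexity required the positive semidefiniteness of $D_\ka^2(\log\circ\al)(x)$. So the entire computation of Christoffel symbols $\Ga_{\ka,jk}^i$ in a normal coordinate chart $\varphi$ at $x$ (chosen so that $g_{ij}(0)=\de_{ij}$ and $\Gamma^i_{jk}(0)=0$) is already available and can be reused without modification.

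First, I would recall from the proof of Proposition \ref{pro-2} the identity
\[
\sum_i \Ga_{\ka,jk}^i\frac{\partial A}{\partial z_i}
= \frac{1}{A}\frac{\partial A}{\partial z_j}\frac{\partial A}{\partial z_k}
- \frac{\de_{jk}}{2A}\sum_i\left(\frac{\partial A}{\partial z_i}\right)^2,
\]
where $A=\al\circ\varphi$. Combining this with the general formula
\[
A_{\ka,jk} = \frac{\partial^2 A}{\partial z_j\partial z_k} - \sum_i\Ga_{\ka,jk}^i\frac{\partial A}{\partial z_i}
\]
for the components of $D_\ka^2\al(x)$ gives
\[
A_{\ka,jk} = \frac{\partial^2 A}{\partial z_j\partial z_k}
- \frac{1}{A}\frac{\partial A}{\partial z_j}\frac{\partial A}{\partial z_k}
+ \frac{\de_{jk}}{2A}\sum_i\left(\frac{\partial A}{\partial z_i}\right)^2.
\]

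Next, I would translate this into a coordinate-free statement. Since at the origin of the normal chart the metric $\langle\cdot,\cdot\rangle_x$ is the standard inner product of $\R^m$, the last expression is the matrix in normal coordinates of the bilinear form
\[
D^2\al(x) - \frac{1}{\al(x)}\,D\al(x)\otimes D\al(x) + \frac{1}{2\al(x)}\|D\al(x)\|_x^2\,\langle\cdot,\cdot\rangle_x.
\]
Multiplying by the positive scalar $2\al(x)$, positive semidefiniteness of $D_\ka^2\al(x)$ is equivalent to positive semidefiniteness of
\[
2\al(x)D^2\al(x) - 2\,D\al(x)\otimes D\al(x) + \|D\al(x)\|_x^2\,\langle\cdot,\cdot\rangle_x,
\]
which evaluated on $(\x,\x)$ is exactly the stated inequality. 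Finally I would invoke Proposition \ref{pro-1} to conclude that this pointwise condition is equivalent to convexity of $\al$ in $\CM_\ka$.

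There is essentially no conceptual obstacle: all of the Riemannian bookkeeping has already been carried out in Proposition \ref{pro-2}, and what changes is only the $\psi''$-term that previously came from $\psi=\log$. Removing that single correction produces a factor of $2$ rather than $4$ in front of $(D\al(x)\x)^2$, which is precisely the difference between the self-convexity criterion of Proposition \ref{pro-2} and the statement of Proposition \ref{prop:convexnolog}. The only minor care point is to remember that $2\al(x)>0$, so that multiplication by it preserves the sign of the quadratic form.
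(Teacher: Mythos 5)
Your proposal is correct and matches the paper's proof, which is literally ``follow the lines of the proof of Proposition \ref{pro-2} with $\psi$ equal to the identity map instead of $\psi=\log$''; you have simply carried out that substitution explicitly, correctly dropping the $\psi''$-term and obtaining the coefficient $2$ instead of $4$ in front of $(D\al(x)\x)^2$. No gaps.
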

\begin{proof}
We follow the lines of the proof of Proposition \ref{pro-2} with $\psi$ equal to the identity map instead of $\psi = \log$. 
\end{proof}

\section{Some general formulas for matrices}

\begin{proposition}\label{pro-4}
Let $A = (\Si, 0) \in \GLS$, where $\Si = \diag (\si_1\geq\cdots\geq \si_{n-1}>\si_n) \in \K^{n \times n}$. The map $\si_n : \GLS \rightarrow \R$ is a smooth map and, for every $U \in \K^{n \times m}$, 
$$\left\{
\begin{array}[pos]{l}
	D\si_n(A)U = \mathrm{Re}(u_{nn}),\\
	D^2\sigma^2_n(A)(U,U) = 2\sum_{j=1}^m |u_{nj}|^2 - 2\sum_{k=1}^{n-1}\frac{|u_{kn}\si_n + \overline{u_{nk}}\si_k|^2}{\si_k^2-\si_n^2}.
\end{array}
\right.
$$
\end{proposition}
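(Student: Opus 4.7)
The plan is to reduce everything to Rellich--Kato perturbation theory for a simple eigenvalue of the smooth Hermitian family $M(t)=A(t)A(t)^*$, with $A(t)=A+tU$. Because $A\in\GLS$ means exactly that $\si_n^2$ is a simple eigenvalue of $AA^*$, the map $A\mapsto \si_n(A)^2$ is smooth on $\GLS$, and so is $\si_n$ itself (taking positive square roots of a positive smooth function). At the base point $A=(\Si,0)$ one has $M(0)=\Si^2$, $M'(0)=UA^*+AU^*$, $M''(0)=2UU^*$, and the unit eigenvector for $\si_n^2$ is simply $e_n\in\K^n$. The first-order formula $\lambda'(0)=e_n^*M'(0)e_n$ with $A^*e_n=\si_n\tilde e_n\in\K^m$ gives $\lambda'(0)=2\si_n\,\mathrm{Re}(u_{nn})$, so dividing by $2\si_n$ yields $D\si_n(A)U=\mathrm{Re}(u_{nn})$.

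For the second derivative, I would invoke the standard second-order Rellich identity for a simple eigenvalue of a smooth Hermitian family,
\[
\lambda''(0)=e_n^*M''(0)e_n-2\,e_n^*M'(0)\bigl(M(0)-\si_n^2 I\bigr)^{\dagger}\Pi\,M'(0)e_n,
\]
where $\Pi$ is the orthogonal projector onto $e_n^{\perp}\subset\K^n$ and $\dagger$ is the Moore--Penrose pseudo-inverse. The first term is immediate: $e_n^*M''(0)e_n=2\sum_{j=1}^m|u_{nj}|^2$. For the second term, a direct computation starting from $UA^*e_n=\si_n(u_{1n},\ldots,u_{nn})^T$ and $AU^*e_n=(\si_1\overline{u_{n1}},\ldots,\si_n\overline{u_{nn}})^T$ shows that the $k$-th coordinate of $M'(0)e_n$ equals $\si_n u_{kn}+\si_k\overline{u_{nk}}$ for $k=1,\ldots,n$. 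The operator $(M(0)-\si_n^2 I)^{\dagger}\Pi$ is diagonal with entries $(\si_k^2-\si_n^2)^{-1}$ for $k<n$ and $0$ for $k=n$, so the sandwiched quadratic form collapses to $\sum_{k=1}^{n-1}|\si_n u_{kn}+\si_k\overline{u_{nk}}|^2/(\si_k^2-\si_n^2)$, and multiplying by $-2$ yields the claimed expression for $D^2\si_n^2(A)(U,U)$.

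The only real obstacle is the bookkeeping in the second-order perturbation formula: one must track the conjugations correctly (using that $M'(0)$ is Hermitian so that $e_n^*M'(0)w=(M'(0)e_n)^*w$), remember to project out the $k=n$ component via $\Pi$ (the coordinate $2\si_n\,\mathrm{Re}(u_{nn})$ would otherwise be multiplied by a divergent $1/(\si_n^2-\si_n^2)$), and apply the pseudo-inverse only on the orthogonal complement of the eigenvector. Once this is done cleanly, the proposition follows from the two displayed calculations.
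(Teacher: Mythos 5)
Your proposal is correct and follows essentially the same route as the paper: the paper derives, via the implicit function theorem applied to the eigenvalue equation $BB^*u(B)=\sigma_n^2(B)u(B)$ and the Moore--Penrose inverse $(\sigma_n^2 I_n - BB^*)^{\dagger}$, exactly the second-order simple-eigenvalue perturbation identity that you quote as the Rellich formula, and then evaluates the same terms $e_n^*M''(0)e_n$ and $(M'(0)e_n)_k=\sigma_n u_{kn}+\sigma_k\overline{u_{nk}}$ at the base point. Your sign conventions, the projection away from the $e_n$-direction, and the final quadratic form all match the paper's computation.
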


\begin{proof} Since $\si_n^2$ is an eigenvalue of $AA^*$ with multiplicity $1$, the implicit function theorem proves the existence of smooth functions $\si^2_n(B) \in \R$ and $u(B) \in \K^n$, defined in an open neighborhood of $A$ and satisfying
$$\left\{
\begin{array}[pos]{l}
	BB^*u(B) = \si^2_n(B)u(B),\\
	\left\| u(B) \right\|^2 = 1,\\
	u(A) = e_n = (0, \ldots ,0,1)^T \in \K^n,\\
	\si^2_n(A) = \si_n^2.
\end{array}
\right.
$$
Differentiating these equations at $B$ gives, for any $U \in \K^{n \times m}$, 
$$\left\{
\begin{array}[pos]{l}
	(UB^*+BU^*)u(B) + BB^*\u(B)= \left( {\sigma_n^2} \right)' u(B) + \sic(B) \u (B),\\
	u(B)^*\u(B) = 0
\end{array}
\right.
$$
with $\u (B) = Du(B)U$ and $ \left( {\sigma_n^2} \right)' = D\sic (B)U$. Pre-multiplying the first equation by $u(B)^*$ gives
$$u(B)^*(UB^*+BU^*)u(B) + u(B)^*BB^*\u(B)=  \left( {\sigma_n^2} \right)' u(B)^*u(B) + \sic(B) u(B)^* \u (B)$$
so that
$$D\sic (B)U =  \left( {\sigma_n^2} \right)' = 2 \mathrm{Re} (u(B)^* UB^* u(B))$$
and
$$D \si_n(B)U = \frac{\mathrm{Re} (u(B)^* UB^* u(B))}{\si_n(B)}.$$
The derivative of the eigenvector is now easy to compute:
$$D u(B)U = \u (B) = (\sic (B) I_n - BB^*)^\dagger (UB^*+BU^* -  \left( {\sigma_n^2} \right)' I_n)u(B)$$
where $(\sic (B) I_n - BB^*)^\dagger$ denotes the generalized inverse (or Moore-Penrose inverse) of $\sic (B) I_n - BB^*$. 

The second derivative of $\sic$ at $B$ is given by
$$D^2\sic(B)(U,U) = 2 \mathrm{Re} (\u(B)^*UB^* u(B) + u(B)^* UU^* u(B) + u(B)^*UB^* \u(B)) = $$
$$2 \mathrm{Re} (u(B)^* UU^* u(B) + u(B)^*(UB^*+BU^*) \u(B)) = 2 \mathrm{Re} (u(B)^* UU^* u(B) + $$
$$u(B)^*(UB^*+BU^*)(\sic (B) I_n - BB^*)^\dagger (UB^*+BU^* -  \left( {\sigma_n^2} \right)' I_n)u(B)).$$
Using $u(A) = e_n$ and $\si_n(A) = \si_n$ we get 
$$\left\{
\begin{array}[pos]{l}
	D\sic (A)U = 2 \mathrm{Re} (UA^*)_{nn} = 2 \si_n \mathrm{Re}( u_{nn}),\\
	D\si_n (A)U = \mathrm{Re}( u_{nn}),\\
\end{array}
\right.
$$
and the second derivative is given by
$$D^2\sic(A)(U,U) = $$
$$2 \mathrm{Re} \left( (UU^*)_{nn} + \sum_{k=1}^{n-1}(UA^*+AU^*)_{nk}(\si_n^2 - \si_k^2)^{-1}(UA^*+AU^* -  \left( {\sigma_n^2} \right)' I_n)_{kn} \right) = $$
$$2 \mathrm{Re} \left( (UU^*)_{nn} + \sum_{k=1}^{n-1}\frac{\left|(UA^*+AU^*)_{kn}\right|^2}{\si_n^2 - \si_k^2} \right) = 
2 \sum_{j=1}^{m} \left| u_{nj} \right|^2 - 2 \sum_{k=1}^{n-1}\frac{\left| u_{kn}\si_n + \overline{u_{nk}}\si_k \right|^2}{\si_k^2 - \si_n^2}.$$
\end{proof}

\begin{corollary}\label{cor-2}
Let $A = (\Si, 0) \in \GLS$, where $\Si = \diag(\si_1\geq\cdots\geq \si_{n-1}>\si_n>0) \in \K^{n \times n}$. Let us define $\rho(A) = \si_n(A) / \left\| A \right\|_F$. Then, for any $U \in \K^{n \times m}$ such that $\mathrm{Re} \left\langle A,U \right\rangle_F = 0$, we have 
$$\left\{
\begin{array}[pos]{l}
	D\rho (A)U = \frac{\mathrm{Re}( u_{nn})}{\left\| A \right\|_F},\\
	D^2\rho^2 (A)(U,U) = \frac{2}{\left\| A \right\|_F^2}\left(\sum_{j=1}^m |u_{nj}|^2 - \sum_{k=1}^{n-1}\frac{|u_{kn}\si_n + \overline{u_{nk}}\si_k|^2}{\si_k^2-\si_n^2} - \frac{\left\| U \right\|_F^2}{\left\| A \right\|_F^2}\si_n^2\right).\\
\end{array}
\right.
$$
\end{corollary}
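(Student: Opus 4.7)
The plan is to express $\rho$ and $\rho^2$ as simple products of two factors whose derivatives are already in hand: the smallest singular value, treated in Proposition \ref{pro-4}, and the Frobenius norm, whose derivatives are elementary. The hypothesis $\mathrm{Re}\langle A,U\rangle_F=0$ will annihilate every cross term that would otherwise couple those two factors, which is exactly what produces the clean closed-form expressions.

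First I would dispatch the norm ingredients. Since $h(A):=\|A\|_F^2=\langle A,A\rangle_F$ is a Hermitian quadratic form on $\K^{n\times m}$, bilinearity gives $Dh(A)U=2\,\mathrm{Re}\langle A,U\rangle_F$ and $D^2h(A)(U,U)=2\|U\|_F^2$. Applying $[h^{-1}]'=-h^{-2}h'$ and $[h^{-1}]''=2h^{-3}(h')^2-h^{-2}h''$, the orthogonality assumption immediately yields, on the subspace $\{U:\mathrm{Re}\langle A,U\rangle_F=0\}$, the identities $D(\|\cdot\|_F^{-1})(A)U=0$, $D(\|\cdot\|_F^{-2})(A)U=0$, and $D^2(\|\cdot\|_F^{-2})(A)(U,U)=-2\|U\|_F^2/\|A\|_F^4$.

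With these in hand the assembly is mechanical. The product rule applied to $\rho=\sigma_n\cdot\|\cdot\|_F^{-1}$ gives $D\rho(A)U=\mathrm{Re}(u_{nn})/\|A\|_F$ by Proposition \ref{pro-4}, since the term containing $D(\|\cdot\|_F^{-1})(A)U$ vanishes. For $\rho^2=\sigma_n^2\cdot\|\cdot\|_F^{-2}$ the Leibniz rule for second derivatives produces three contributions: the pure term $D^2\sigma_n^2(A)(U,U)/\|A\|_F^2$, which Proposition \ref{pro-4} expands into the singular-value sums; the cross term $2\,D\sigma_n^2(A)U\cdot D(\|\cdot\|_F^{-2})(A)U$, which is zero by orthogonality; and the pure term $\sigma_n^2\cdot D^2(\|\cdot\|_F^{-2})(A)(U,U)=-2\sigma_n^2\|U\|_F^2/\|A\|_F^4$. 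Factoring out the common $2/\|A\|_F^2$ reproduces precisely the stated expression.

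There is no genuine obstacle here: the calculation is routine, and the orthogonality condition $\mathrm{Re}\langle A,U\rangle_F=0$ does essentially all the simplification by killing the mixed term that would otherwise couple the derivatives of $\sigma_n^2$ with those of $\|\cdot\|_F^{-2}$. The only thing to watch carefully is bookkeeping of signs and the factor of $2$ in $D^2(\|\cdot\|_F^{-2})$, whose negative sign is what turns the last contribution into the subtracted $\sigma_n^2\|U\|_F^2/\|A\|_F^2$ appearing inside the parentheses.
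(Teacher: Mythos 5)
Your proposal is correct and follows essentially the same route as the paper: both reduce everything to Proposition \ref{pro-4} together with the elementary derivatives of $\|A\|_F^2$, and both use the hypothesis $\mathrm{Re}\langle A,U\rangle_F=0$ to kill the cross terms; the only difference is that you apply the product rule to $\sigma_n^2\cdot\|\cdot\|_F^{-2}$ where the paper applies the quotient rule to $\sigma_n^2/\|\cdot\|_F^2$, which is the same computation in different clothing.
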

\begin{proof}
Note that
\[
D\rho(A)U = \frac{D\sigma_n(A)U \|A\|_F - \sigma_n(A)\frac{2\mathrm{Re}\langle A,U\rangle_F}{2\| A \|_F}}{\| A \|_F^2}=\frac{D\sigma_n(A)U}{\| A \|_F},
\]
and the first assertion of the corollary follows from Proposition \ref{pro-4}.
For the second one, note that $h={h_1}/{h_2}$ (for real valued $\mathcal{C}^2$ functions $h,h_1,h_2$ with $h_2(0)\neq0$) implies
\[
D^2h = \frac{h_2^2D^2h_1 - h_1h_2D^2h_2 - 2h_2Dh_1Dh_2 + 2h_1(Dh_2)^2}{h_2^3}.
\]
Now, $\rho^2(A)={\sigma_n^2(A)}/{\| A \|_F^2}$, $D(\| A \|_F^2)U = 2\mathrm{Re}\langle A,U \rangle_F = 0,$ $D^2(\| A \|_F^2)(U,U) = 2\| U \|_F^2,$
and $D^2\sigma_n^2(A)(U,U)$ is known from Proposition \ref{pro-4}. The formula for $D^2\rho^2 (A)$ follows after some elementary calculations.
\end{proof}

\section{The affine linear case}

We consider here the Riemannian manifold $\CM = \GLS$ equipped with the usual Frobenius Hermitian product.
Let $\al :\GLS \rightarrow \R$ be defined as $\al(A) = {1}/{\sigma_n^2(A)}$. 

\begin{corollary}\label{cor-3} The function $\al$ is self-convex in $\GLS$.
\end{corollary}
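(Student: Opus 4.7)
The plan is to apply Proposition \ref{pro-3} with $\rho=\sigma_n$ and $\al=1/\sigma_n^2$, which reduces self-convexity of $\al$ at $A\in\GLS$ to the single pointwise inequality
$$2\,\|\dot A\|_F^2\,\|D\sigma_n(A)\|^2\ \geq\ D^2\sigma_n^2(A)(\dot A,\dot A)\qquad\text{for all }\dot A\in T_A\GLS.$$
So the task is simply to verify this inequality; it is not even necessary to keep track of the full (messy) expression for $D^2\sigma_n^2$, only to bound it from above.

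The first step is a reduction to normal form. The real Frobenius inner product and the function $\sigma_n$ are both invariant under the isometric action $(U,V)\cdot A=UAV^{*}$ of $\mathbb{U}(n)\times\mathbb{U}(m)$ on $\K^{n\times m}$, and this action preserves $\GLS$. Since both sides of the target inequality are invariant, I may assume that $A$ is in singular value decomposition form, $A=(\Sigma,0)$ with $\Sigma=\diag(\sigma_1\geq\cdots\geq\sigma_{n-1}>\sigma_n>0)$, so that the explicit formulas of Proposition \ref{pro-4} are available.

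For such $A$, Proposition \ref{pro-4} gives $D\sigma_n(A)U=\mathrm{Re}(u_{nn})=\mathrm{Re}\langle U,E_{nn}\rangle_F$, whose Riesz representative with respect to the real Frobenius inner product is the matrix $E_{nn}$, hence $\|D\sigma_n(A)\|^{2}=1$. The inequality to be proved therefore simplifies to
$$2\,\|U\|_F^{2}\ \geq\ D^{2}\sigma_n^{2}(A)(U,U)\;=\;2\sum_{j=1}^{m}|u_{nj}|^{2}\;-\;2\sum_{k=1}^{n-1}\frac{|u_{kn}\sigma_n+\overline{u_{nk}}\sigma_k|^{2}}{\sigma_k^{2}-\sigma_n^{2}}.$$
The key observation is that the subtracted sum is \emph{non-negative}: by the very definition of $\GLS$ we have $\sigma_k>\sigma_n$ for $k<n$, so each denominator is strictly positive. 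Dropping this non-negative correction only enlarges the right-hand side, after which the trivial inequality $\sum_{j=1}^{m}|u_{nj}|^{2}\leq\|U\|_F^{2}$ closes the argument.

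I do not anticipate any genuine obstacle: once Propositions \ref{pro-3} and \ref{pro-4} are in place, the verification is essentially algebraic, and the only subtleties are (a) performing the SVD reduction and (b) computing the dual norm of $D\sigma_n(A)$ correctly with respect to the real Frobenius inner product. What is noteworthy is that the self-convexity is comfortable rather than tight: the ``repulsion'' term from higher singular values is \emph{beneficial} (it decreases $D^{2}\sigma_n^{2}$), and the bare diagonal contribution $\sum_j|u_{nj}|^{2}$ is already dominated by $\|U\|_F^{2}$.
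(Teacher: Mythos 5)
Your proposal is correct and follows exactly the paper's own argument: reduce via Proposition \ref{pro-3} to the inequality $2\|U\|_F^2\|D\sigma_n(A)\|^2\geq D^2\sigma_n^2(A)(U,U)$, pass to singular value decomposition form by unitary invariance, and then conclude from Proposition \ref{pro-4} since $\|D\sigma_n(A)\|=1$ and the subtracted sum is non-negative because $\sigma_k>\sigma_n$ for $k<n$. No gaps.
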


\begin{proof}
From Proposition \ref{pro-3}, it suffices to see that
\[
2\| U \|_F^2 \| D\sigma_n(A) \|_F^2  \geq D^2\sigma_n^2(A)(U,U).
\]
Since unitary transformations are isometries in $\GLS$ with respect to the condition metric we may suppose, via a singular value decomposition that $A = (\Si, 0) \in \GLS$, where $\Si = \diag(\si_1 \geq \cdots \geq \si_{n-1} > \si_n) \in \K^{n \times n}$. Now, the inequality to verify is obvious from Proposition \ref{pro-4}, as $\| D\sigma_n(A) \|_F = 1$ and
\[
D^2\sigma_n^2(A)(U,U) = 2 \sum_{j=1}^m|u_{nj}|^2 - 2\sum_{k=1}^{n-1}\frac{| u_{kn}\si_n + \overline{u_{nk}}\si_k |^2}{\si_k^2 - \si_n^2} \leq 2\sum_{j=1}^m |u_{nj}|^2 \leq 2 \| U \|_F^2.
\]
\end{proof}

\begin{corollary}\label{cor-3b} Let $r>0$. The function $\al$ is
self-convex in the sphere $\S_r(\GLS)$ of radius $r$ in $\GLS$.
\end{corollary}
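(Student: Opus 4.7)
The plan is to apply Proposition~\ref{pro-3} directly on the submanifold $\CM=\S_r(\GLS)$ endowed with the metric induced from the Frobenius product, taking $\rho=\sigma_n$ restricted to this sphere. Indeed, by Definition~\ref{def-3} and Proposition~\ref{pro-2} the self-convexity of $\al$ on a Riemannian manifold is an intrinsic condition that can be checked using the ambient metric (not the condition metric) on that manifold. Since everything in sight (the Frobenius norm, $\sigma_n$, the sphere of radius $r$, and the condition metric) is invariant under the action of $\mathbf{U}(n)\times\mathbf{U}(m)$, I would begin by reducing to a canonical form $A=(\Sigma,0)$ with $\Sigma=\diag(\sigma_1\geq\cdots\geq\sigma_{n-1}>\sigma_n)$, so that Proposition~\ref{pro-4} applies verbatim.

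The second step is to express the intrinsic gradient and Hessian on the sphere in terms of the ambient ones. Since $T_A\S_r(\GLS)=\{U:\mathrm{Re}\langle A,U\rangle_F=0\}$ and the outward unit normal is $A/r$, I would project the ambient gradient $\nabla\sigma_n(A)=E_{nn}$ (read off from Proposition~\ref{pro-4}) onto $T_A\S_r(\GLS)$, obtaining
\[
\nabla_{\S_r}\sigma_n(A)=E_{nn}-\frac{\sigma_n}{r^2}A,\qquad \|\nabla_{\S_r}\sigma_n(A)\|_F^2=1-\frac{\sigma_n^2}{r^2}.
\]
For the Hessian I would use the standard formula relating the intrinsic Hessian on a sphere to the ambient one through the second fundamental form. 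Plugging in $\ddot\gamma(0)=-\|\dot U\|_F^2\,A/r^2$ for a sphere geodesic and using the homogeneity identity $D\sigma_n^2(A)(A)=2\sigma_n^2$, I obtain
\[
D^2_{\S_r}\sigma_n^2(A)(\dot U,\dot U)=D^2\sigma_n^2(A)(\dot U,\dot U)-\frac{2\sigma_n^2\|\dot U\|_F^2}{r^2}
\]
for every $\dot U\in T_A\S_r(\GLS)$.

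The last step is to substitute these expressions into the inequality of Proposition~\ref{pro-3}. The terms involving $\sigma_n^2/r^2$ cancel exactly:
\[
2\|\dot U\|_F^2\Bigl(1-\tfrac{\sigma_n^2}{r^2}\Bigr)-\Bigl(D^2\sigma_n^2(A)(\dot U,\dot U)-\tfrac{2\sigma_n^2\|\dot U\|_F^2}{r^2}\Bigr)=2\|\dot U\|_F^2-D^2\sigma_n^2(A)(\dot U,\dot U),
\]
so the sphere condition reduces to the ambient inequality $D^2\sigma_n^2(A)(\dot U,\dot U)\leq 2\|\dot U\|_F^2$, which was already established in the proof of Corollary~\ref{cor-3} from the explicit formula of Proposition~\ref{pro-4}. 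This yields the conclusion.

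The essential analytic content has already been established in Corollary~\ref{cor-3}; the only real work here is bookkeeping with the second fundamental form of the sphere. The pleasant point, and the step I would be most careful about, is the exact cancellation of the $\sigma_n^2/r^2$ contributions from the intrinsic gradient and from the intrinsic Hessian, which is what allows the argument to go through with no loss compared with the ambient case.
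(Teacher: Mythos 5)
Your proof is correct, but it takes a genuinely different route from the paper's. The paper never verifies the differential criterion on the sphere: it shows that the radial projection $X\mapsto rX/\|X\|_F$ does not increase condition length, deduces that a minimizing condition geodesic of $(\GLS,\al)$ joining two points of $\S_r(\GLS)$ must lie in the sphere, concludes that condition geodesics of the sphere are condition geodesics of the ambient space, and then quotes Corollary \ref{cor-3}. You instead apply Proposition \ref{pro-3} intrinsically on $\S_r(\GLS)$ with the induced Frobenius metric: the tangential gradient has $\|\nabla_{\S_r}\si_n(A)\|_F^2=1-\si_n^2/r^2$, and the covariant Hessian picks up the second fundamental form correction $D\si_n^2(A)(\ddot\gamma(0))=-2\si_n^2\|U\|_F^2/r^2$ (using $D\si_n^2(A)(A)=2\si_n^2$, which follows from homogeneity or directly from Proposition \ref{pro-4}); the two corrections cancel exactly and the criterion reduces to the ambient inequality $D^2\si_n^2(A)(U,U)\le 2\|U\|_F^2$ already established in Corollary \ref{cor-3}. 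All the individual computations check out; in particular your corrected Hessian is consistent with Corollary \ref{cor-2}, since $\si_n^2|_{\S_r}=r^2\rho^2|_{\S_r}$ with $\rho=\si_n/\|\cdot\|_F$ homogeneous of degree zero. The paper's argument buys the stronger geometric fact that the sphere is essentially totally geodesic for the condition metric; yours is purely local and avoids the variational length comparison. One wording caveat: when you say the criterion ``can be checked using the ambient metric,'' what you mean --- and what you actually compute --- is the induced metric and covariant derivatives on $\S_r(\GLS)$ itself, not the flat derivatives of $\K^{n\times m}$; conflating the two is exactly where the $\si_n^2/r^2$ terms would be lost.
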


\begin{proof}
It is enough to prove that any geodesic in $(\S_r(\GLS), \al)$ is also
a geodesic in $(\GLS, \al)$. Indeed, suppose that $A$ and $B$ are
matrices in $\S_r(\GLS)$ and the minimal geodesic in $(\GLS, \al)$
between $A$ and $B$ is $X(t)$, $a \le t \le b$. Then we claim that
$L_\ka\left(\frac{rX(t)}{\left\| X(t) \right\|_F}\right) \le
L_\ka(X(t))$. Indeed, for any $t$,
$$\frac{d}{dt}\left(\frac{rX(t)}{\left\| X(t) \right\|_F}\right) =
\frac{r\frac{dX(t)}{dt}}{\left\| X(t) \right\|_F} -
r\frac{X(t)Re(\langle X(t),\frac{dX(t)}{dt}\rangle_F)}{\left\| X(t)
\right\|_F^3}$$
so that
$$
\left\|\frac{d}{dt}\left(\frac{rX(t)}{\|X(t)\|}\right)\right\|_F=
$$
$$
\left(\frac{r^2\left\|\frac{dX(t)}{dt} \right\|_F^2}{\left\| X(t)
\right\|_F^2}+\frac{r^2Re(\langle
X(t),\frac{dX(t)}{dt}\rangle_F)^2}{\left\| X(t)
\right\|_F^4}-\frac{2r^2Re(\langle
X(t),\frac{dX(t)}{dt}\rangle_F)^2}{\left\| X(t)
\right\|_F^4}\right)^{1/2}=
$$
$$
\left(\frac{r^2\left\|\frac{dX(t)}{dt} \right\|_F^2}{\left\| X(t)
\right\|_F^2}-\frac{r^2Re(\langle
X(t),\frac{dX(t)}{dt}\rangle_F)^2}{\left\| X(t)
\right\|_F^4}\right)^{1/2}\leq \frac{r\left\|\frac{dX(t)}{dt}
\right\|_F}{\left\| X(t) \right\|_F}
$$
Hence,
$$\left\| \frac{d}{dt}\left(\frac{rX(t)}{\left\| X(t)
\right\|_F}\right) \right\|_\ka =
\si_n^{-1}\left(\frac{rX(t)}{\left\| X(t) \right\|_F}\right)
\left\|\frac{d}{dt}\left(\frac{rX(t)}{\|X(t)\|}\right)\right\|_F=
$$
$$\frac{\|X(t)\|_F\si_n^{-1}\left(X(t)\right)}{r}
\left\|\frac{d}{dt}\left(\frac{rX(t)}{\|X(t)\|}\right)\right\|_F\le
\si_n^{-1}\left(X(t)\right)\left\| \frac{dX(t)}{dt} \right\|_F=
\left\| \frac{dX(t)}{dt} \right\|_\ka .$$
Therefore $X(t)$ can only be a minimizing geodesic if it belongs to
$\S_r(\GLS)$. Since all geodesics are locally minimizing geodesics,
Corollary \ref{cor-3b} follows.
\end{proof}

\vskip 3mm The following gives an example of a smooth and non-selfconvex function in $\GL$. 

\begin{example}\label{ex-noconvexmuf}\label{ex-Frob}
For $n \ge 3$, the function $\al(A)=\sigma_1(A)^{-2}+\cdots+\sigma_n(A)^{-2}$ is not self-convex in $\GL$.
\end{example}
\begin{proof}
For simplicity we consider the case of real square matrices. We have $\al(A)=\|A^{-1}\|_F^2$,
\[
D\al(A)\dot{A}=-2\langle A^{-1},A^{-1}\dot{A}A^{-1}\rangle_F=-2\langle A^{-T}A^{-1}A^{-T},\dot{A}\rangle_F,
\]
\[
\|D\al(A)\|_F^2=4\|A^{-T}A^{-1}A^{-T}\|^2_F,
\]
\[
D^2\al(A)(\dot{A},\dot{A})=2\|A^{-1}\dot{A}A^{-1}\|^2_F+4\langle A^{-1},A^{-1}\dot{A}A^{-1}\dot{A}A^{-1}\rangle_F.
\]
According to Proposition \ref{prop:convexnolog}, the self-convexity of $\al(A)$ in $\mathbb{GL}_n$ is equivalent to 
\[
2\|A^{-1}\|_F^2\left(2\|A^{-1}\dot{A}A^{-1}\|^2_F+4\langle A^{-1},A^{-1}\dot{A}A^{-1}\dot{A}A^{-1}\rangle_F\right)+
\]
\[
4\|\dot{A}\|_F^2\|A^{-T}A^{-1}A^{-T}\|^2_F-8\langle A^{-1},A^{-1}\dot{A}A^{-1}\rangle_F^2\geq0
\]
This inequality is not satisfied when
\[
A=\begin{pmatrix}1&0&0\\0&1&0\\0&0&2\end{pmatrix}\text{   and   }\dot{A}=\begin{pmatrix}0&1&0\\-1&0&0\\0&0&0\end{pmatrix}.
\]
\end{proof}

\section{The homogeneous linear case}

\subsection{The complex projective space.} 




The matter of this subsection is mainly taken from Gallot-Hulin-Lafontaine \cite{gal} sect. 2.A.5. 

Let $V$ be a Hermitian space of complex dimension $\dim_\C V = d+1$. We denote by $\P(V)$ the corresponding projective space that is the quotient of $V \setminus \left\{ 0 \right\}$ by the group $\C^*$ of dilations of $V$; $\P(V)$ is equipped with its usual smooth manifold structure with complex dimension $\dim \P(V) = d$. We denote by $p$ the canonical surjection.

Let $V$ be considered as a real vector space of dimension $\dim_\R V = 2d+2$ equipped with the scalar product $\mathrm{Re}\left\langle .,. \right\rangle_V$.
The sphere $\S(V)$ is a submanifold in $V$ of real dimension $2d+1$. This sphere being equipped with the induced metric becomes a Riemannian manifold and, as usual, we identify the tangent space at $z \in \S(V)$ with
$$T_z \S(V) = \left\{ u \in V \ : \ \mathrm{Re}\left\langle u,z \right\rangle_V = 0 \right\}.$$

The projective space $\P(V)$ can also be seen as the quotient $\S(V) / S^1$ of the unit sphere in $V$ by the unit circle in $\C$ for the action given by $(\la , z) \in S^1 \times \S(V) \rightarrow \la z \in \S(V)$. The canonical map is denoted by
$$p_V : \S(V) \rightarrow \P(V).$$
$p_V$ is the restriction of $p$ to $\S(V)$.

The horizontal space at $z \in \S(V)$ related to $p_V$ is defined as the (real) orthogonal complement of $\ker Dp_V(z)$ in $T_z \S(V)$. This horizontal space is denoted by $H_z$. Since $V$ is decomposed in the (real) orthogonal sum 
$$ V = \R z \oplus \R iz \oplus z^\perp $$
and since $\ker Dp_V(z) = \R iz$ (the tangent space at $z$ to the circle $S^1 z$) we get 
$$H_z  = z^\perp = \left\{ u \in V \ : \ \left\langle u,z \right\rangle = 0 \right\}.$$

There exists on $\P(V)$ a unique Riemannian metric such that $p_V$ is a Riemannian submersion that is, $p_V$ is a smooth 
submersion and, for any $z \in \S(V)$, $Dp_V(z)$ is an isometry between $H_z$ and $T_{p(z)}\P(V)$. Thus, for this Riemannian structure, one has:
$$\left\langle Dp_V(z)u, Dp_V(z)v \right\rangle_{T_{p(z)}\P(V)} = \mathrm{Re} \left\langle u,v \right\rangle_V$$
for any $z \in \S(V)$ and $u,v \in H_z$. 

\begin{proposition} \label{pro-CP} Let $z \in \S(V)$ be given. 
\begin{enumerate}
\item A chart at $p(z) \in \P(V)$ is defined by 
$$\varphi_z : H_z \rightarrow \P(V), \ \ \varphi_z(u) = p(z+u).$$

\item Its derivative at $0$ is the restriction of $Dp(z)$ at $H_z$: 
$$D\varphi_z (0) = Dp(z) : H_z \rightarrow T_{p(z)}\P(V)$$
which is an isometry. 

\item For any smooth mapping $\psi : \P(V) \rightarrow \R$, and for any  $v \in H_z$ we have
$$D\psi(p(z))\left(Dp(z) v\right) = D(\psi \circ \varphi_z)(0)v$$
and
$$D^2\psi(p(z))(Dp(z)v, Dp(z)v) = D^2(\psi \circ \varphi_z)(0)(v,v).$$
\end{enumerate}
\end{proposition}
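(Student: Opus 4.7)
The plan for parts (1) and (2) is to write $\varphi_z$ as the composition $u \mapsto z+u \mapsto p(z+u)$. The chain rule then gives $D\varphi_z(0) = Dp(z)|_{H_z}$, and since $H_z \subset T_z\S(V)$ this restriction coincides with $Dp_V(z)|_{H_z}$. By the very definition of the Riemannian submersion structure on $\P(V)$, the latter is an isometry onto $T_{p(z)}\P(V)$; in particular it is a linear isomorphism, so by the inverse function theorem $\varphi_z$ is a local diffeomorphism at $0$, yielding a chart at $p(z)$ whose derivative at the origin is the stated isometry.

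The first identity in (3) is again just the chain rule: $D(\psi\circ\varphi_z)(0)v = D\psi(p(z))(D\varphi_z(0)v) = D\psi(p(z))(Dp(z)v)$. For the Hessian identity I would appeal to the intrinsic formula for the covariant second derivative already recalled in the paper: if $\sigma$ is any geodesic of $\P(V)$ with $\sigma(0)=p(z)$, then $D^2\psi(p(z))(\sigma'(0),\sigma'(0)) = (\psi\circ\sigma)''(0)$. Granting this, it suffices to exhibit a geodesic $\sigma$ of $\P(V)$ and a reparametrization $s(t)$ with $s(0)=0$, $s''(0)=0$ and $\sigma'(0)=Dp(z)(v/\|v\|)$, such that the curve $\beta(t) := \varphi_z(tv) = p(z+tv)$ equals $\sigma(s(t))$. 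The ordinary chain rule applied to $\psi\circ\beta = (\psi\circ\sigma)\circ s$ then gives
\[
D^2(\psi\circ\varphi_z)(0)(v,v) = (\psi\circ\beta)''(0) = s'(0)^2(\psi\circ\sigma)''(0) = \|v\|^2 D^2\psi(p(z))(\sigma'(0),\sigma'(0)),
\]
which by bilinearity is $D^2\psi(p(z))(Dp(z)v,Dp(z)v)$, as required. (For $v=0$ both sides vanish trivially, so one may assume $v\neq 0$.)

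To produce $\sigma$ and $s$ I would lift $\beta$ to the sphere: set $\alpha(t) := (z+tv)/\|z+tv\|$, so that $\beta = p_V\circ\alpha$. Using the complex Hermitian orthogonality $v\in H_z = z^\perp$ one computes $\|z+tv\|^2 = 1+t^2\|v\|^2$, and with $s(t):=\arctan(t\|v\|)$ the lift rewrites as $\alpha(t) = \cos s\cdot z + \sin s\cdot v/\|v\|$. Thus $\tilde\alpha(s) := \cos s\cdot z + \sin s\cdot v/\|v\|$ is an arc-length parametrization of the great circle in the real $2$-plane $\R z + \R(v/\|v\|)$, hence a geodesic of $\S(V)$, with $\tilde\alpha'(0) = v/\|v\| \in H_z$ horizontal. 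By the standard fact that a geodesic of the total space of a Riemannian submersion whose initial velocity is horizontal stays horizontal and projects to a geodesic of the base, $\sigma := p_V\circ\tilde\alpha$ is a geodesic of $\P(V)$ with $\sigma'(0) = Dp(z)(v/\|v\|)$; and $\beta = \sigma\circ s$ with $s(t) = \arctan(t\|v\|)$ visibly satisfies $s(0)=0$, $s'(0)=\|v\|$, $s''(0)=0$. The main delicate point I expect is the systematic use of the full complex orthogonality $\langle z,v\rangle_V=0$ rather than merely its real part: it is this that forces $\|z+tv\|^2 = 1+t^2\|v\|^2$ and that keeps the lifted great circle's initial tangent horizontal in the sense of the submersion. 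The vanishing $s''(0)=0$ is exactly the ``normal chart'' property that kills any Christoffel correction in the Hessian identity and closes the argument.
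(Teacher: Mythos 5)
Your argument is correct and is essentially the paper's own proof run in the opposite direction: the paper lifts the geodesic of $\P(V)$ horizontally to a great circle and kills the correction term in the second-derivative chain rule because the lift's acceleration $\gamma''(0)$ is normal to the sphere (hence in $\ker Dp(z)$), whereas you project the straight line $z+tv$ down and observe that it coincides with the geodesic reparametrized by $s(t)=\arctan(t\|v\|)$ with $s''(0)=0$ --- the same second-order cancellation expressed in a chart rather than in the ambient space. Both versions rest on exactly the two facts you invoke (the geodesic characterization of the covariant Hessian and the horizontal-lift property of the Riemannian submersion $p_V$), so nothing is missing.
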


\begin{proof} 1 and 2 are easy. We have $D(\psi \circ \varphi_z)(0) = D\psi(p(z))D(\varphi_z)(0)$ which gives 3 since 
$D(\varphi_z)(0)v = Dp(z)v$ for any $v \in H_z$. For the second derivative, recall that $D^2\psi(p(z))(Dp(z)v,Dp(z)v)=(\psi\circ\tilde{\gamma})''(0)$, where $\tilde{\gamma}$ is a geodesic curve in $\P(V)$ such that $\tilde{\gamma}(0)=p(z),\tilde{\gamma}'(0)=Dp(z)v$. Now, consider the horizontal $p_V-$lift $\gamma$ of $\tilde{\gamma}$ to $\S(V)$ with base point $z$. Note that $\gamma(0)=z,\gamma'(0)=v$. Hence,
\[
(\psi\circ\tilde{\gamma})''(0)=(\psi\circ p\circ \gamma)''(0)=
D^2(\psi\circ p)(z)(v,v)+D\psi(p(z))Dp(z)\gamma''(0).
\]
As $\gamma''(0)$ is orthogonal to $T_z \S(V)$, we have $Dp(z)\gamma''(0)=0$. Finally,
\[
D^2(\psi\circ p)(z)(v,v)=(\psi\circ p (z+tv))''(0)=(\psi\circ\varphi_z(tv))''(0)=D^2(\psi\circ\varphi_z)(0)(v,v),
\]
and the assertion on the second derivative follows.
\end{proof}

The following result will be helpful.

\begin{proposition}\label{pro-RS}
Let $\CM_1,{\CM}_2$ be Riemannian manifolds and $\al_2:\CM_2 \rightarrow ]0,\infty[$ be of class ${C}^2$. Let $\pi:\CM_1 \rightarrow \CM_2$ be a Riemannian submersion. Let $\CU_2 \subseteq \CM_2$ be an open set and assume that $\al_1 = \al_2 \circ \pi$ is self-convex in $\CU_1 = \pi^{-1}(\CU_2)$. Then, $\al_2$ is self-convex in $\CU_2$.
\end{proposition}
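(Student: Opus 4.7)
The plan is to exploit the fact that the Riemannian submersion $\pi$ remains a Riemannian submersion after conformal rescaling by $\al_1$ and $\al_2$, and then to transfer self-convexity downstairs by lifting geodesics.

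First I would verify that $\pi : (\CU_1, \al_1 \langle\cdot,\cdot\rangle_1) \to (\CU_2, \al_2 \langle\cdot,\cdot\rangle_2)$ is again a Riemannian submersion. The horizontal distribution $H_x = (\ker D\pi(x))^\perp$ is insensitive to pointwise positive rescaling of the metric, so it is the same for both the original and the conformally rescaled metric. For any $u,v \in H_x$,
\[
\al_1(x)\langle u,v\rangle_{1,x} \;=\; \al_2(\pi(x))\langle u,v\rangle_{1,x} \;=\; \al_2(\pi(x))\langle D\pi(x)u, D\pi(x)v\rangle_{2,\pi(x)},
\]
using $\al_1 = \al_2 \circ \pi$ and the fact that $D\pi(x)|_{H_x}$ is an isometry for the unscaled metrics. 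This is exactly the Riemannian submersion property for the scaled metrics.

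Next I would invoke the standard fact that, under a Riemannian submersion, horizontal lifts of geodesics of the base are geodesics of the total space. Given a geodesic $\ga : [a,b] \to \CU_2$ of the condition metric on $\CU_2$, fix any $t_0 \in [a,b]$ and any $\tilde{x}_0 \in \pi^{-1}(\ga(t_0))$; there exists a neighborhood $I$ of $t_0$ on which a horizontal lift $\tilde{\ga} : I \to \CU_1$ with $\tilde{\ga}(t_0) = \tilde{x}_0$ is defined, and $\tilde{\ga}$ is a geodesic of $(\CU_1, \al_1\langle\cdot,\cdot\rangle_1)$. By the assumed self-convexity of $\al_1$, the map $\log \al_1 \circ \tilde{\ga}$ is convex on $I$; but $\al_1 \circ \tilde{\ga} = \al_2 \circ \pi \circ \tilde{\ga} = \al_2 \circ \ga$, so $\log \al_2 \circ \ga$ is convex on a neighborhood of each $t_0$, hence on all of $[a,b]$. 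This gives self-convexity of $\al_2$ on $\CU_2$.

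The only delicate point is that a global horizontal lift of $\ga$ over the whole interval $[a,b]$ need not exist if the fibers of $\pi$ fail to be complete; however, since convexity of a $C^2$ function reduces to nonnegativity of its second derivative and is therefore a local property, local horizontal lifts (which always exist) suffice and this is not a real obstacle.
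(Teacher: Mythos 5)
Your proof is correct and follows essentially the same route as the paper: observe that $\pi$ remains a Riemannian submersion for the conformally rescaled (condition) metrics because $\al_1=\al_2\circ\pi$, lift a geodesic of the base horizontally to a geodesic of the total space, and push the convexity of $\log\al_1$ along the lift down to $\log\al_2$ along the original geodesic. You are in fact slightly more careful than the paper, which asserts without verification that the rescaled map is still a submersion and tacitly assumes a global horizontal lift exists; your explicit check of the submersion property and your reduction to local lifts via the locality of convexity close those small gaps.
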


\begin{proof}
Let $\CM_{\ka,1}$ be $\CM_1$, but endowed with the condition metric given by $\al_1$, and let $\CM_{\ka,2}$ be $\CM_2$, but endowed with the condition metric given by $\al_2$. Then, $\pi : \CM_{\ka,1} \rightarrow \CM_{\ka,2}$ is also a Riemannian submersion.

Now, let $\gamma_2 : [a,b] \rightarrow \CU_2 \subseteq \CM_{\ka,2}$ be a geodesic, and let $\gamma_1 \subseteq \CM_{\ka,1}$ be its horizontal lift by $\pi$. Then, $\gamma_1$ is a geodesic in $\CU_1 \subseteq \CM_1$ (see \cite[Cor 2.109]{gal}) and hence $\log \al_1(\ga_1(t))$ is a convex function of $t$. Now,
\[
\log(\al_2(\gamma_2(t))) = \log(\al_2 \circ \pi(\ga_1 (t))) = \log(\al_1(\ga(t))),
\]
is convex as wanted. \end{proof}

\begin{corollary}\label{cor-3c} The function $\al_2 : \P(\GLS) \rightarrow \R, \; \al_2(A) = \|A\|_F^2 \si_n^{-2}(A)$ is self-convex in $\P(\GLS)$.
\end{corollary}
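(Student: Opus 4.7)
The plan is to reduce Corollary \ref{cor-3c} to Corollary \ref{cor-3b} via the Riemannian submersion lemma of Proposition \ref{pro-RS}. With $V = \K^{n \times m}$, I would restrict the canonical Riemannian submersion $p_V : \S(V) \to \P(V)$ of Proposition \ref{pro-CP} to the open $\K^*$-invariant subset $\GLS \subset V$, producing a Riemannian submersion
$$\pi : \S_1(\GLS) \longrightarrow \P(\GLS).$$

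Two small observations then do all the work. First, $\al_2$ is well defined on $\P(\GLS)$: under $A \mapsto \la A$ with $\la \in \K^*$, both $\|A\|_F^2$ and $\si_n(A)^2$ pick up the factor $|\la|^2$, so their quotient is scale-invariant. Second, the pullback $\al_1 := \al_2 \circ \pi$ simplifies on the unit Frobenius sphere, since $\|A\|_F = 1$ there, to
$$\al_1(A) = \frac{\|A\|_F^2}{\si_n(A)^2} = \frac{1}{\si_n(A)^2}.$$
By Corollary \ref{cor-3b} applied with $r = 1$, the function $\al_1$ is self-convex on $\S_1(\GLS)$. A direct application of Proposition \ref{pro-RS} with $\CU_2 = \P(\GLS)$ and $\CU_1 = \pi^{-1}(\CU_2) = \S_1(\GLS)$ then transfers self-convexity from $\al_1$ upstairs to $\al_2$ downstairs, finishing the proof.

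I do not anticipate a genuine obstacle: the argument is essentially formal, combining the scale invariance of $\al_2$, the trivial identification of its pullback with $1/\si_n^2$ on the unit sphere, and the two preceding results. The only minor point to verify is that the restriction $\pi$ remains a Riemannian submersion onto $\P(\GLS)$, which is immediate since $\GLS$ is open and $\K^*$-invariant in $V$, so the fibers of $p_V$ and the horizontal distribution restrict cleanly to the open preimage.
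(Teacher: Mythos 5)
Your argument is correct and is essentially the paper's own proof: the authors likewise note that $p : \S(\GLS) \to \P(\GLS)$ is a Riemannian submersion, that $\al_2 \circ p$ coincides with the function $\al = \si_n^{-2}$ of Corollary \ref{cor-3b} on the unit sphere, and then invoke Proposition \ref{pro-RS}. Your added remarks on the scale-invariance of $\al_2$ and on restricting the submersion to the open $\K^*$-invariant set $\GLS$ are correct and merely make explicit what the paper leaves implicit.
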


\begin{proof}
Note that $p : \S(\GLS) \rightarrow \P(\GLS)$ is a Riemannian submersion and $\al_2 = \al \circ p$ where $\al$ is as in Corollary \ref{cor-3b}. The corollary follows from Proposition \ref{pro-RS}.
\end{proof}

\subsection{The solution variety.} 
Let us denote by $p_1$ and $p_2$ the canonical maps 
$$\S_1 \stackrel{p_1}{\rightarrow} \P\left(\K^{n \times (n+1)}\right) \ \mbox{ and } \ 
\S_2 \stackrel{p_2}{\rightarrow} \P\left(\K^{n+1}\right) = \P_n (\K), $$
where $\S_1$ is the unit sphere in $\K^{n \times (n+1)}$ and $\S_2$ is the unit sphere in $\K^{n+1}$. Consider the affine solution variety,
$$\hat{\CW}^> = \left\{(M,\ze) \in  \S_1 \times \S_2 \ : \ M \in \mathbb{GL}_{n,n+1}^> \mbox{ and } M\ze = 0\right\}.$$
It is a Riemannian manifold equipped with the metric induced by the product metric on $\K^{n\times(n+1)}  \times \K^{n+1}$. The tangent space to $\hat\CW^>$ is given by
$$T_{(M,\ze)} \hat{\CW}^> = \left\{(\dot M, \dot \ze) \in T_M\S_1 \times T_\ze \S_2 \ : \ \dot M \ze + M \dot \ze = 0 \right\}.$$
The projective solution variety considered here is
$$\CW^> = \left\{(p_1(M),p_2(\ze)) \in  \PKn \times \Pn \ : \ M \in \mathbb{GL}_{n,n+1}^>  \mbox{ and } M\ze = 0\right\},$$
that is also a Riemannian manifold equipped with the metric induced by the product metric on $\PKn \times \Pn$.

\vskip 3mm Let us denote by $\pi_1$ the restriction to $\hat{\CW}^>$ of the first projection $\S_1 \times \S_2 \rightarrow \S_1$,
and by $R : \hat{\CW}^> \rightarrow \R$, $R = \sigma_n \circ \pi_1.$ We have

\begin{lemma}\label{lem-der1dies}
Let $w = ( M, \zeta) \in \hat\CW^>$ and let $\gamma$ be a geodesic in $\hat\CW^>$, $\gamma(0)=w$. Then,
\[
D \sigma_n(\pi_1(w))(\pi_1\circ\gamma)''(0)<0.
\]
\end{lemma}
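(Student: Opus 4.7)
My plan is to reduce $(M,\zeta)$ to a canonical singular value form, use the fact that $\hat\CW^>$ carries the induced metric to read off $\gamma''(0)$ from the ambient normal space, and then apply the derivative formula of Proposition \ref{pro-4}. Because $\hat\CW^>$, the induced metric, and $\sigma_n$ are all invariant under the $U_n\times U_{n+1}$ action $(U,V)\cdot(M,\zeta)=(UMV^*,V\zeta)$, I may assume $M=(\Sigma,0)$ with $\Sigma=\mathrm{diag}(\sigma_1,\ldots,\sigma_n)$, $\sigma_{n-1}>\sigma_n>0$; the constraints $M\zeta=0$ and $\|\zeta\|=1$ then force $\zeta=e_{n+1}\in\K^{n+1}$.

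Next I identify $\gamma''(0)$. Since $\hat\CW^>$ is an open subset of the zero set of $\|M\|_F^2-1$, $\|\zeta\|^2-1$, and $M\zeta$ inside $\K^{n\times(n+1)}\times\K^{n+1}$, and its Riemannian structure is the induced one, the acceleration of a geodesic is orthogonal, in the Euclidean sense, to $T_{(M,\zeta)}\hat\CW^>$. Computing the real gradients of the three defining equations gives
\[
\gamma''(0)=\bigl(\lambda M+w\zeta^*,\ \mu\zeta+M^*w\bigr),\qquad \lambda,\mu\in\R,\ w\in\K^n.
\]
Differentiating $\|M(t)\|_F^2\equiv 1$ twice at $0$ yields $\mathrm{Re}\langle M''(0),M\rangle_F=-\|M'(0)\|_F^2$; since $\mathrm{Re}\langle w\zeta^*,M\rangle_F=\mathrm{Re}((M\zeta)^*w)=0$, the only contribution is $\lambda\|M\|_F^2=\lambda$, so $\lambda=-\|M'(0)\|_F^2$.

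The final evaluation is immediate. By Proposition \ref{pro-4} in our coordinates, $D\sigma_n(M)U=\mathrm{Re}(U_{n,n})$, which invariantly is $\mathrm{Re}\langle u_n v_n^*,U\rangle_F$ with $u_n=e_n\in\K^n$ and $v_n=e_n\in\K^{n+1}$ the left and right singular vectors of $M$ for $\sigma_n$. Homogeneity gives $D\sigma_n(M)(M)=\sigma_n$, while
\[
D\sigma_n(M)(w\zeta^*)=\mathrm{Re}\bigl((w^*u_n)(v_n^*\zeta)\bigr)=0
\]
because $\zeta$ spans $\ker M$ and is therefore orthogonal to the right singular vector $v_n$ for $\sigma_n>0$. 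Since $(\pi_1\circ\gamma)''(0)=M''(0)=\lambda M+w\zeta^*$,
\[
D\sigma_n(\pi_1(w))(\pi_1\circ\gamma)''(0)=\lambda\sigma_n=-\sigma_n\,\|M'(0)\|_F^2.
\]

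The one subtle point, which I take to be the main obstacle, is strictness. The displayed quantity vanishes exactly when $M'(0)=0$; expanding $M(t)\zeta(t)\equiv 0$ to second order in that case also forces $(I+MM^*)w=0$, hence $w=0$ and $M''(0)=0$ identically. So the strict inequality requires that $\pi_1\circ\gamma$ is not momentarily stationary at $0$. In the intended application this is automatic: a geodesic with $M'(0)=0$ reduces to a phase rotation $\zeta(t)=e^{i\alpha t}\zeta$ that becomes trivial after projecting to the projective solution variety $\CW^>$, and this is precisely the regime in which Lemma \ref{lem-der1dies} is applied.
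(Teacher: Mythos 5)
Your proof is correct and follows essentially the same route as the paper's: reduce to $M=(\Sigma,0)$, $\zeta=e_{n+1}$ by unitary invariance, use that the geodesic's acceleration is Euclidean-orthogonal to $T_w\hat\CW^>$ to pin down $M''(0)$ (you compute the full normal space from the gradients of the defining equations, the paper only tests against the tangent vectors $((A,0),0)$ with $\mathrm{Re}\langle\Sigma,A\rangle=0$), and then apply Proposition \ref{pro-4} to get $-\sigma_n\|M'(0)\|_F^2$. Your closing caveat about strictness is a genuine point: the paper's proof silently assumes $M'(0)\neq 0$, and in the complex case the geodesic $t\mapsto(M,e^{ict}\zeta)$ has $M'(0)=M''(0)=0$, so the lemma as stated really only gives $\leq 0$ with equality exactly when $(\pi_1\circ\gamma)'(0)=0$ --- which, as you note, is harmless for the inequality needed in Theorem \ref{th-3}.
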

\begin{proof} Our problem is invariant by unitary change of coordinates. Hence, using a singular value decomposition,  we can assume that $M = (\Si, 0) \in \mathbb{GL}_{n,n+1}^>$, where $\Si = \diag(\si_1\geq\cdots\geq \si_{n-1}>\si_n) \in \K^{n \times n}$ and $\ze = e_{n+1} = (0, \ldots , 0, 1)^T \in \S_2$. As $\gamma=(M(t),\zeta(t))$ is a geodesic of $\hat\CW^>\subseteq\K^{n\times(n+1)}\times\K^n$, $\gamma''(0)$ is orthogonal to $T_w\hat{\CW}$, which contains all the pairs of the form $((A,0),0)$ where $A$ is a $n\times n$ matrix, ${\rm Re}\langle \Sigma , A \rangle=0$. Hence, $M''(0)$ has the form
\[
M''(0)=(a \Si,*),
\]
for some real number $a\in\R$. Finally, $M(t)$ is contained in the sphere so $\left\| M(t) \right\|_F = 1$ and
\[
0=(||M(t)||_F^2)''(0)=2||M'(0)||_F^2+2{\rm Re}\langle M(0),M''(0)\rangle=2||M'(0)||_F^2+2a,
\]
so that $a=-\|M'(0)\|_F^2$ and $(M''(0))_{nn}=-\|M'(0)\|_F^2\sigma_n$. From Proposition \ref{pro-4},
\[
D \sigma_n(\pi_1(w))(\pi_1\circ\gamma)''(0)={\rm Re}((\pi_1\circ\gamma)''(0)_{nn})={\rm Re}(M''(0))_{nn}<0.
\]
\end{proof}

\begin{theorem} \label{th-3} The map $\al : \hat\CW^> \rightarrow \R$ given by $\al(M,\zeta) = \sigma_n(M)^{-2}$ is self-convex. 
\end{theorem}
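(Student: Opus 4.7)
The plan is to verify directly the characterization of self-convexity given in Proposition \ref{pro-3} for $\CM = \hat\CW^>$ with its induced Riemannian metric from the product Frobenius/Hermitian structure on $\K^{n\times(n+1)}\times\K^{n+1}$. Writing $\al = 1/\rho^2$ with $\rho(M,\zeta) = \sigma_n(M)$, I must show
\[
2\,\|\dot w\|^2\,\|\nabla\rho(w)\|^2 \;\geq\; D^2\rho^2(w)(\dot w,\dot w)
\]
for every $w = (M,\zeta) \in \hat\CW^>$ and $\dot w = (\dot M,\dot\zeta) \in T_w\hat\CW^>$, with the gradient and the second covariant derivative taken intrinsically on $\hat\CW^>$. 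The isometric action of $U(n)\times U(n+1)$ on $\hat\CW^>$ given by $(U,V)\cdot(M,\zeta) = (U M V^*, V\zeta)$ preserves $\sigma_n$, so by a singular value decomposition I reduce to $M = (\Sigma,0)$ and $\zeta = e_{n+1}$.

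\textbf{Key computations.} The ambient gradient of $\sigma_n\circ\pi_1$ in the product space is $(E_{nn},0)$, where $E_{nn}$ has a single $1$ in position $(n,n)$ (Proposition \ref{pro-4}). The normal space to $\hat\CW^>$ at $w$ in $\K^{n\times(n+1)}\times\K^{n+1}$ has the mutually orthogonal basis $n_0 = (M,0)$ from $\|M\|_F=1$, $n_{-1}=(0,\zeta)$ from $\|\zeta\|=1$, and $n_i = (E_{i,n+1},\sigma_i e_i)$ for $i=1,\ldots,n$ from $M\zeta=0$ (together with $i\,n_i$ in the complex case). Only $n_0$ pairs non-trivially with $(E_{nn},0)$, contributing $\sigma_n$; hence
\[
\nabla\rho(w) = (E_{nn}-\sigma_n M,\,0), \qquad \|\nabla\rho(w)\|^2 = 1-\sigma_n^2.
\]
For the intrinsic second covariant derivative, the Gauss formula applied to the ambient extension $(M,\zeta)\mapsto \sigma_n(M)^2$ gives
\[
D^2\rho^2(w)(\dot w,\dot w) = D^2\sigma_n^2(M)(\dot M,\dot M) + D\sigma_n^2(M)\,M''(0),
\]
where $M''(0)$ is the matrix-component of the (purely normal) acceleration of a geodesic on $\hat\CW^>$ tangent to $\dot w$ at $w$. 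The proof of Lemma \ref{lem-der1dies} has already established that $(M''(0))_{nn} = -\|\dot M\|_F^2\,\sigma_n$, so the first-derivative formula of Proposition \ref{pro-4} identifies this correction as $-2\sigma_n^2\|\dot M\|_F^2$.

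\textbf{Conclusion and main obstacle.} Substituting the explicit expression for $D^2\sigma_n^2(M)$ from Proposition \ref{pro-4} together with the above and multiplying out, the $\sigma_n^2\|\dot M\|_F^2$ terms cancel and the required inequality simplifies to
\[
\|\dot M\|_F^2 + (1-\sigma_n^2)\|\dot\zeta\|^2 + \sum_{k=1}^{n-1}\frac{\bigl|\dot M_{kn}\sigma_n + \overline{\dot M_{nk}}\sigma_k\bigr|^2}{\sigma_k^2-\sigma_n^2} \;\geq\; \sum_{j=1}^{n+1} |\dot M_{nj}|^2,
\]
which is immediate, since $\|\dot M\|_F^2 \geq \sum_j |\dot M_{nj}|^2$ trivially, the $k$-sum is non-negative (its denominators are positive on $\mathbb{GL}_{n,n+1}^>$), and $\sigma_n^2 \leq \|M\|_F^2 = 1$. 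The main obstacle is the normal-space bookkeeping: one has to correctly enumerate the constraint directions $n_i$ coming from $M\zeta=0$, verify their mutual orthogonality with $n_0$ and $n_{-1}$ at the SVD-normalized base point so that the projection of the ambient gradient collapses to a single $n_0$-term, and then invoke Lemma \ref{lem-der1dies} to extract the sign-definite Gauss-correction. Once this geometric setup is in place, the concluding inequality is essentially tautological.
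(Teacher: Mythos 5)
Your proof is correct and follows the same overall strategy as the paper's: reduce to $M=(\Sigma,0)$, $\zeta=e_{n+1}$ by unitary invariance, apply the criterion of Proposition \ref{pro-3}, and control the intrinsic Hessian of $\sigma_n^2$ on $\hat\CW^>$ via the normal acceleration of geodesics computed in Lemma \ref{lem-der1dies}. Where you genuinely diverge is in the two quantitative ingredients. The paper's proof asserts $\|DR(w)\|=1$ and then uses only the sign of the Gauss correction, namely $D\sigma_n(\pi_1(w))(\pi_1\circ\gamma)''(0)<0$. You instead project the ambient gradient $(E_{nn},0)$ onto $T_w\hat\CW^>$ and obtain $\|\nabla\rho(w)\|^2=1-\sigma_n^2$ --- which is indeed the correct intrinsic norm, since $(E_{nn},0)$ is not tangent to the sphere $\|M\|_F=1$ --- and you therefore need, and correctly extract, the exact value $-2\sigma_n^2\|\dot M\|_F^2$ of the correction term from the identity $(M''(0))_{nn}=-\|\dot M\|_F^2\,\sigma_n$ established inside the proof of Lemma \ref{lem-der1dies}, combined with the first-derivative formula of Proposition \ref{pro-4}. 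With the smaller gradient norm the target inequality is stronger, and the exact correction is precisely what compensates; your final displayed inequality is the right reduction and is immediate, since $\|\dot M\|_F^2\geq\sum_j|\dot M_{nj}|^2$ and the remaining terms are nonnegative. So your version is not merely an alternative: it supplies the sharper bookkeeping that the criterion of Proposition \ref{pro-3}, read intrinsically on the submanifold $\hat\CW^>$, actually demands, whereas the paper trades that precision for the cruder bound $\|DR(w)\|=1$ together with the strict sign of the correction. The normal-space enumeration you flag as the main obstacle does check out: $n_0=(M,0)$, $n_{-1}=(0,\zeta)$ and $n_i=(E_{i,n+1},\sigma_i e_i)$ (with $in_i$ in the complex case) are mutually orthogonal at the SVD-normalized point, and only $n_0$ pairs nontrivially with $(E_{nn},0)$.
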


\begin{proof} Using unitary invariance we can take $M = (\Si, 0) \in \mathbb{GL}_{n,n+1}^>$, where $\Si = \diag(\si_1\geq\cdots\geq \si_{n-1}>\si_n) \in \K^{n \times n}$ and $\ze = e_{n+1} = (0, \ldots , 0, 1)^T \in \S_2$. According to proposition \ref{pro-3} we have to prove that 
$$2 \left\| \dot w \right\|_w^2 \left\| DR(w) \right\|^2 \geq D^2 R^2(w)(\dot w,\dot w)$$
for every $w \in \hat\CW^>$ and $\dot w \in T_w \hat\CW^>$. From Proposition \ref{pro-4} we have
$$DR(w)\dot w = D \sigma_n(\pi_1(w))(D\pi_1(w)\dot w)={\rm Re}(D\pi_1(w)\dot w)_{nn},$$ 
so that $\|DR(w)\|=1$. On the other hand, assume that $\dot w\neq0$ and let $\gamma$ be a geodesic in $\hat\CW^>$, $\gamma(0)=w,\dot\gamma(0)=\dot w$. From Lemma \ref{lem-der1dies},
$$D^2R^2(w)(\dot w, \dot w) =(\sigma_n^2 \circ \pi_1\circ\gamma)''(0)=  $$
$$D^2 \sigma_n^2(\pi_1(w))(D\pi_1(w)\dot w,D\pi_1(w)\dot w)+2\sigma_nD \sigma_n(\pi_1(w))(\pi_1\circ\gamma)''(0)<$$
$$D^2 \sigma_n^2(\pi_1(w))(D\pi_1(w)(\dot w),D\pi_1(w)(\dot w)).$$
Thus, we have to prove that for $\dot y\in\K^{n\times(n+1)}$,
$$2 \left\|\dot y \right\|^2 \geq D^2\sigma_n^2(\pi_1(w))(\dot y,\dot y).$$
which is a consequence of our Proposition \ref{pro-4}. \end{proof}

\begin{corollary} \label{cor-4} The map $\al_2 : \CW^> \rightarrow \R$ given by $\al_2 (M,\zeta)=\|M\|_F^2/\si_n^2(M)$ is self-convex. 
\end{corollary}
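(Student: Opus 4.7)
The plan is to apply Proposition \ref{pro-RS} to transfer the self-convexity of $\sigma_n^{-2}$ on $\hat\CW^>$ (established in Theorem \ref{th-3}) down to $\CW^>$. The key structural observation is that $\al_2$ is scale-invariant in $M$ (both numerator and denominator are homogeneous of degree $2$) and does not depend on $\zeta$, so $\al_2$ descends to a well-defined function on $\CW^>$, and its lift to $\hat\CW^>$ is precisely $\sigma_n^{-2}(M)$ because $\|M\|_F = 1$ for every $(M,\zeta) \in \hat\CW^>$.

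The first step is to exhibit a Riemannian submersion $\pi : \hat\CW^> \to \CW^>$. Consider the product map $\Pi = p_1 \times p_2 : \S_1 \times \S_2 \to \PKn \times \Pn$, which is a Riemannian submersion since each factor is (cf.\ Proposition \ref{pro-CP}). I then restrict $\Pi$ to $\hat\CW^>$. Note that $\Pi^{-1}(\CW^>) = \hat\CW^>$, because $(e^{i\theta_1} M)(e^{i\theta_2}\zeta) = e^{i(\theta_1+\theta_2)}M\zeta = 0$ whenever $M\zeta = 0$. So $\pi := \Pi|_{\hat\CW^>}$ is a smooth surjection onto $\CW^>$ whose fibers are exactly the $S^1\times S^1$-orbits.

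The main technical point is that $\pi$ is itself a Riemannian submersion (when $\hat\CW^>$ and $\CW^>$ are given the induced metrics from $\S_1\times\S_2$ and $\PKn \times \Pn$, respectively). For this, I check that the vertical directions of $\Pi$, namely $(iM, 0)$ and $(0, i\zeta)$, are tangent to $\hat\CW^>$ at $(M,\zeta)$: indeed, $(iM)\zeta + M\cdot 0 = i(M\zeta) = 0$ and $0\cdot\zeta + M(i\zeta) = i(M\zeta) = 0$. Consequently the fiber of $\pi$ at $(M,\zeta)$ has the same tangent space as the fiber of $\Pi$, so the horizontal space of $\pi$ is $T_{(M,\zeta)}\hat\CW^> \cap H^\Pi_{(M,\zeta)}$, and the differential of $\pi$ on this horizontal space is an isometry onto $T_{\pi(M,\zeta)}\CW^>$ because it is the restriction of the corresponding isometry for $\Pi$ and because $\CW^>$ carries the submanifold metric.

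With the Riemannian submersion in hand, the rest is automatic. The pullback $\al_2 \circ \pi$ on $\hat\CW^>$ equals $\sigma_n^{-2}(M)$ since $\|M\|_F = 1$ there, and this function is self-convex by Theorem \ref{th-3}. Proposition \ref{pro-RS} (taking $\CU_2 = \CW^>$ and $\CU_1 = \hat\CW^>$) then delivers the self-convexity of $\al_2$ on $\CW^>$. I do not anticipate any serious obstacle; the only nontrivial verification is the submersion property of $\pi$, which reduces to the tangency of the vertical directions of $\Pi$ to $\hat\CW^>$ observed above.
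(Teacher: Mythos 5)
Your proof is correct and follows essentially the same route as the paper: restrict the Riemannian submersion $p_1\times p_2$ to $\hat\CW^>$ after checking that the kernel of $D(p_1\times p_2)$ is contained in $T_{(M,\zeta)}\hat\CW^>$, then apply Proposition \ref{pro-RS} together with Theorem \ref{th-3} (which, incidentally, is the theorem the paper means to cite, despite a typo pointing to Theorem \ref{th-2}). The only caveat is that your verification of the vertical directions via multiplication by $i$ applies only to $\K=\C$; for $\K=\R$ the kernel of $D(p_1\times p_2)$ is trivial, so the containment holds vacuously.
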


\begin{proof} 
Consider the Riemannian submersion 
\[
p_1\times p_2:\S_1\times \S_2\longrightarrow\PKn\times\Pn,\ p_1\times p_2(M,\ze)=(p_1(M),p_2(\ze)).
\]
Note that $T_{(M,\ze)}\hat{\CW}^>$ contains the kernel of the derivative $D(p_1\times p_2)(M,\ze)$. Thus, the restriction $p_1\times p_2:\hat{\CW}^>\rightarrow \CW^>$, is also a Riemannian submersion. The corollary follows combining Proposition \ref{pro-RS} and Theorem \ref{th-2}.
\end{proof}

\section{Self-convexity of the distance from a submanifold of $\R^j$} 

Let $\CN$ be a $C^k$ submanifold without boundary $\CN \subset \R^j$, $k \ge 2$. Let us denote by
$$\rho(x) = d(x,\CN) = \inf_{y \in \CN} \left\| x-y \right\|$$
the distance from $\CN$ to $x \in \R^j$ (here $d(x,y)=\left\| x-y \right\|$ denotes the Euclidean distance). Let $\CU$ be the largest open set in $\R^j$ such that, for any $x \in \CU$, there is a unique closest point from $\CN$ to $x$. 
This point is denoted by $K(x)$ so that we have a map defined by
$$K : \CU \rightarrow \CN, \ \rho(x) = d(x,K(x)).$$
Classical properties of $\rho$ and $K$ are given in the following (see also Foote \cite{foo}, Li and Nirenberg \cite{li-nir}).

\begin{proposition} \label{pro-5} \begin{enumerate} \item $\rho$ is defined and $1-$Lipschitz on $\R^j$,
\item For any $x \in \CU$, $x-K(x)$ is a vector normal to $\CN$ at $K(x)$ i.e. $x-K(x) \in \left(T_{K(x)}\CN\right)^\perp$,
\item $K$ is $C^{k-1}$ on $\CU$,
\item $\rho^2$ is $C^{k}$ on $\CU$, $D\rho^2(x)\dot x =  2 \left\langle x - K(x), \dot x \right\rangle$ and $D^2\rho^2(x)(\dot x,\dot x)=2\|\x\|^2  - 2 \left\langle DK(x)\dot x , \dot x \right\rangle$
\item $\rho$ is $C^{k}$ on $\CU \setminus \CN$,
\item $\left\langle DK(x)\dot x , \dot x \right\rangle \ge 0$ for every $x \in \CU$ and $\dot x \in \R^j$.
\end{enumerate}
\end{proposition}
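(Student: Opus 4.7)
The plan is to prove items (1)--(6) in the stated order, each step feeding the next. Items (1) and (2) are essentially immediate. For (1), the triangle inequality gives $\|x - z\| \le \|x - y\| + \|y - z\|$ for any $z \in \CN$; infimizing over $z$ yields $\rho(x) \le \|x - y\| + \rho(y)$, and the symmetric argument gives the $1$-Lipschitz bound on all of $\R^j$. For (2), the restriction of $y \mapsto \|x - y\|^2$ to $\CN$ attains its minimum at $K(x)$, so its differential vanishes on $T_{K(x)}\CN$; this reads $\langle x - K(x), v\rangle = 0$ for every $v \in T_{K(x)}\CN$.

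For item (3) I would invoke a tubular-neighborhood argument. The normal bundle $N\CN$ is a $C^{k-1}$ submanifold of $\CN \times \R^j$, and the endpoint map $E : N\CN \to \R^j$, $(y, v) \mapsto y + v$, is $C^{k-1}$ and a local diffeomorphism on the zero section. On the open set $\CU$ it is globally injective by the very definition of $\CU$, hence $E^{-1}$ is $C^{k-1}$ and $K = \pi_\CN \circ E^{-1}$ inherits this regularity. Items (4) and (5) then hinge on one clean observation. Differentiating $\rho^2(x) = \|x - K(x)\|^2$ yields $D\rho^2(x)\dot x = 2\langle x - K(x), \dot x - DK(x)\dot x\rangle$, and the term $\langle x - K(x), DK(x)\dot x\rangle$ vanishes because $DK(x)\dot x$ is tangent to $\CN$ at $K(x)$ while $x - K(x)$ is normal there by (2). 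Hence $D\rho^2(x)\dot x = 2\langle x - K(x), \dot x\rangle$, and since the right-hand side is $C^{k-1}$ in $x$, $\rho^2$ is $C^k$. One further differentiation produces the Hessian formula, and (5) follows because $\sqrt{\cdot}$ is smooth on $(0, \infty)$.

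The main obstacle is item (6), for which I propose a comparison trick. Fix $x \in \CU$ and $\dot x \in \R^j$, and define $f(t) := \|x + t\dot x - K(x)\|^2 - \rho^2(x + t\dot x)$ for $t$ near $0$. Since $K(x)$ is a legitimate competitor for the closest point to $x + t\dot x$, we have $f(t) \ge 0$ with equality at $t = 0$, so $t = 0$ is a local minimum and $f''(0) \ge 0$. Using the formula from (4), one computes $f'(t) = 2\langle K(x + t\dot x) - K(x), \dot x\rangle$, and therefore $f''(0) = 2\langle DK(x)\dot x, \dot x\rangle$; this is exactly the inequality (6). The delicate point is the order of reasoning: (6) relies on $\rho^2 \in C^2$ from (4), which in turn uses the regularity of $K$ from (3), so executing the items sequentially as (1)--(6) keeps the logic linear and avoids circularity.
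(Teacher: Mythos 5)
Items (1), (2), (4) and (5) follow the paper's proof essentially verbatim: triangle inequality, first-order optimality, the cancellation $\langle x-K(x),DK(x)\dot x\rangle=0$ giving $\nabla\rho^2(x)=2(x-K(x))\in C^{k-1}$, and smoothness of the square root away from $\CN$. Your item (3) also follows the paper's route (inverse function theorem for the endpoint map on the normal bundle), but the justification as written has a logical gap: global injectivity of $E$ over $\CU$ does not yield a $C^{k-1}$ inverse (injective $C^1$ maps need not have differentiable inverses --- think $t\mapsto t^3$); for that you need $DE$ to be nonsingular at the relevant points $(K(x),x-K(x))$, i.e.\ the absence of focal points, which is where the second-order optimality condition enters. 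Moreover $E$ need not even be injective on $E^{-1}(\CU)$: a point of $\CU$ may lie on several normal lines of $\CN$, since only the \emph{closest} foot is unique (e.g.\ a point near a circle also sits on the normal line through the antipodal point). This is why the paper restricts the canonical map to the set $\{(y,n):y+tn\in\CU,\ \forall\, 0\le t\le 1\}$ and defers the nonsingularity to Foote and Li--Nirenberg. The paper is admittedly also terse here, but you should either add the focal-point argument or cite the references rather than rest on injectivity.

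Your item (6) is a genuinely different and correct argument. The paper fixes $x$, takes an arbitrary curve $y(t)=K(x(t))$ in $\CN$, differentiates the orthogonality relation $\langle x(t)-y(t),\dot y(t)\rangle=0$, and identifies $\langle DK(x)\dot x,\dot x\rangle$ with $\tfrac12\tfrac{d^2}{dt^2}\|x-y(t)\|^2\big|_{t=0}$, which is nonnegative by second-order optimality of $y(0)$ on $\CN$. You instead compare $\rho^2(x+t\dot x)$ against the competitor $K(x)$, so that $f(t)=\|x+t\dot x-K(x)\|^2-\rho^2(x+t\dot x)\ge 0$ with $f(0)=f'(0)=0$, and the formula of item (4) gives $f'(t)=2\langle K(x+t\dot x)-K(x),\dot x\rangle$, hence $0\le f''(0)=2\langle DK(x)\dot x,\dot x\rangle$. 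Your version is arguably cleaner since it needs only the first-derivative formula and the definition of $\rho$ as an infimum, at the cost of hiding the geometric meaning (the link to the second fundamental form) that the paper's computation makes explicit.
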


\proof \begin{enumerate} \item For any $x$ and $y$ one has $\rho(x) = d(x,K(x)) \le d(x,K(y)) \le d(x,y) + d(y,K(y)) = d(x,y) + \rho(y)$. Since $x$ and $y$ play a symmetric role we get $\left|\rho(x)-\rho(y)\right| \le d(x,y).$
\item This is the classical first order optimality condition in optimization. 
\item This classical result may be derived from the inverse function theorem applied to the canonical map defined on the normal bundle to $\CN$ 
$$\mbox{can} : \mathrm{N}\CN \rightarrow \R^j, \ \mbox{can}(y,n)=y+n,$$ 
for every $y \in \CN$ and $n \in N_y\CN = \left(T_{y}\CN\right)^\perp$. The normal bundle is a $C^{k-1}$ manifold, the canonical map is a $C^{k-1}$ diffeomorphism when restricted to the set $\{(y,n):y+tn\in\CU,\ \forall\ 0\leq t\leq 1\}$ and
$K(x)$ is easily given from $\mbox{can}^{-1}$.
\item The derivative of $\rho^2$ is equal to
$D\rho^2(x)\dot x = 2 \left\langle x - K(x), \dot x - DK(x)\dot x \right\rangle = 2 \left\langle x - K(x), \dot x \right\rangle$ because $DK(x)\dot x \in T_{K(x)} \CN$ and $x-K(x) \in \left(T_{K(x)}\CN\right)^\perp$. Thus $\nabla \rho^2(x) = 2(x-K(x))$ is $C^{k-1}$ on $\CU$ so that $\rho^2$ is $C^{k}$. The formula for $D^2\rho^2$ follows.
\item Obvious.
\item Let $x(t)$ be a curve in $\CU$ with $x(0) = x$. Let us denote $\frac{dx(t)}{dt} = \dot x(t)$, $\frac{d^2x(t)}{dt^2} = \ddot x(t)$, $y(t) = K(x(t))$, 
$\frac{dy(t)}{dt} = \dot y(t)$ and $\frac{d^2y(t)}{dt^2} = \ddot y(t)$. From the first order optimality condition we get
$$\left\langle x(t) - y(t), \dot y(t) \right\rangle = 0$$
whose derivative at $t=0$ is
$$\left\langle \dot x - \dot y, \dot y \right\rangle + \left\langle x - y, \ddot y \right\rangle = 0.$$
Thus
$$\left\langle DK(x)\dot x , \dot x \right\rangle = \left\langle \dot y, \dot x \right\rangle = \left\langle \dot y, \dot y \right\rangle - \left\langle x - y, \ddot y \right\rangle.$$
This last quantity is equal to $\left.\frac{1}{2}\frac{d^2}{dt^2}\left\|x-y(t)\right\|^2\right|_{t=0}$. It is nonnegative by the second order optimality condition.
\end{enumerate}
\qed

\vskip 3mm \noindent {\bf Proof of Theorem \ref{th-2} and Corollary \ref{cor-th-2}.} We are now able to prove our second main theorem. Let us denote $\al(x) = 1/{\rho(x)^2}.$ We shall prove that $\al$ is self-convex on $\CU$.
From proposition \ref{pro-3} it suffices to prove that, for every $\dot x \in \R^j$, 
\[
2\|\x\|^2\|D\rho(x)\|^2\geq D^2\rho^2(x)(\x,\x)
\]
or, according to Proposition \ref{pro-5}.4 and $\left\| D\rho \right\| = 1$, that
\[
2\|\x\|^2 \geq 2\|\x\|^2  - 2 \left\langle DK(x)\dot x , \dot x \right\rangle.
\]
This is obvious from Proposition \ref{pro-5}.4. 

Now we prove Corollary \ref{cor-th-2}. Let $\S_1(\R^j)$ be the sphere of radius $1$ in $\R^j$ and let $p_{\R^j}$ denote the canonical projection $p_{\R^j} : \R^j \rightarrow \P(\R^j)$. Note that the preimage of $\CN$ by $p_{\R^j}$ satisfies
$$d(y, p_{\R^j}^{-1}(\CN)) = d_\P (p_{\R^j}(y), \CN)  \| y \|.$$
As in the proof of Corollary \ref{cor-3b},
the mapping $1/{\rho(x)^2}$ is self-convex in the set $\S_1(\R^j) \cap p_{\R^j}^{-1}(\CU)$. Now, apply Proposition \ref{pro-RS} to the Riemannian submersion $p_{\R^j}$ to conclude the corollary.
\qed

\vskip 3mm \noindent {\bf Two examples.} 
\begin{example} \label{ex-circle} Take $\CU$ the unit disk in $\R^2$ and $\CN$ the unit circle. 
The corresponding function is given by
$$\al(x) = d(x, \CN)^{-2} = 1 / \left(1 - \left\| x \right\| \right)^2.$$
According to Theorem \ref{th-2}, the map $\log \al (x)$ is convex along the condition geodesics in 
$$ \CU \setminus \left\{\left(0,0\right)\right\} = \left\{x \in \R^2 \ : \ 0 < \left\| x \right\| < 1 \right\}.$$
This property also holds in $\CU$: a geodesic through the origin is a ray $x(t) = (-1 + e^t)(\cos \theta, \sin \theta)$ when 
$-\infty < t \leq 0$, and $x(t) = (1 - e^{-t})(\cos \theta, \sin \theta)$ when $0 \le t < \infty$ for some $\theta$. In that case
$$\log \al (x(t)) =  2 \left| t \right|$$
which is convex. 
\end{example}

\begin{example} \label{ex-two-points} Take $\CN \subset \R^2$ equal to the union of the two points $(-1,0)$ and $(1,0)$. 
In that case
$$\al(x)^{-1} = d(x,\CN)^{2} = \min \left((1+x_1)^2 + x_2^2, (1-x_1)^2 + x_2^2 \right).$$
It may be shown that for any $0 < a \leq 1/10$, the straight line segment is the only minimizing geodesic joining
the points $(0,-a)$ and $(0,a)$. Since $\log \al(0, t) = - \log(1+t^2)$
has a maximum at $t = 0$, $g(t)$, $-a \leq t \leq a$, cannot be log-convex. Here $\left\{ 0 \right\} \times \R$ is equal to the locus in $\R^2$ of points equally distant from the two nodes which is the set we avoid in Theorem \ref{th-2}.
\end{example}

\end{document}